\newtheorem{theorem}{Theorem}[section]
\newtheorem{lemma}[theorem]{Lemma}
\newtheorem{conjecture}[theorem]{Conjecture}
\newtheorem*{conjecture*}{Conjecture}
\theoremstyle{definition}
\theoremstyle{remark}
\newtheorem*{remark*}{remark}
\author{Runbo Li}
\address{The High School Affiliated to Renmin University of China, Beijing 100080, People's Republic of China}
\email{carey.lee.0433@gmail.com}
\title[]{On the upper and lower bound orders of almost prime triples}
\subjclass[2020]{11P32, 11N35, 11N36} 
\keywords{Prime, Goldbach-type problems, Sieve, Application of sieve method}
\begin{document}
	
\begin{abstract}
A Hardy-Littlewood triple is a 3-tuple of integers with the form $(n, n+2, n+6)$. In this paper, we study Hardy-Littlewood triples of the form $(p, P_{a}, P_{b})$ and improve the upper and lower bound orders of it, where $p$ is a prime and $P_{r}$ has at most $r$ prime factors. Our new results generalize and improve the previous results.
\end{abstract}

\maketitle


\tableofcontents

\section{Introduction}
Let $x$ be a sufficiently large integer, $N$ be a sufficiently large even integer, $p$ be a prime, and let $P_{r}$ denote an integer with at most $r$ prime factors counted with multiplicity. For each $N \geqslant 4$ and $r \geqslant 2$, we define 
\begin{equation}
\pi_{1,r}(x):= \left|\left\{p : p \leqslant x, p+2=P_{r}\right\}\right|
\end{equation}
and\begin{equation}
D_{1,r}(N):= \left|\left\{p : p \leqslant N, N-p=P_{r}\right\}\right|.
\end{equation}

In 1966 Jingrun Chen \cite{Chen1966} proved his remarkable Chen's theorem: let $x$ be a sufficiently large integer and $N$ be a sufficiently large even integer, then
\begin{equation}
\pi_{1,2}(x)\gg \frac{C_2 x}{(\log x)^2} \quad \text{and} \quad D_{1,2}(N) \gg \frac{C(N) N}{(\log N)^2},
\end{equation}
where
\begin{equation}
C_2:=2\prod_{p>2}\left(1-\frac{1}{(p-1)^{2}}\right) \quad \text{and} \quad C(N):=\prod_{\substack{p \mid N \\ p>2}} \frac{p-1}{p-2} \prod_{p>2}\left(1-\frac{1}{(p-1)^{2}}\right)
\end{equation}
and the detail was published in \cite{Chen1973}. In 1990, Wu \cite{WU90} generalized Chen's theorem and proved that
\begin{equation}
D_{1,3}(N) \gg \frac{C(N) N}{(\log N)^2} \log \log N 
\end{equation}
and
\begin{equation}
D_{1,r}(N) \gg \frac{C(N) N}{(\log N)^2} (\log \log N)^{r-2}. 
\end{equation}
and Kan \cite{Kan1} proved the similar result in 1991. Kan \cite{Kan2} also proved the more generalized theorem in 1992:
\begin{equation}
D_{s,r}(N) \gg \frac{C(N) N}{(\log N)^2} (\log \log N)^{s+r-3},
\end{equation}
where $s \geqslant 1$,
\begin{equation}
D_{s,r}(N):= \left|\left\{P_s : P_s \leqslant N, N-P_s=P_{r}\right\}\right|.
\end{equation}
Clearly their methods can be modified to get a similar lower bound order on the twin prime version. For this, we refer the interested readers to \cite{Kan3}.

Now we focus on the Hardy-Littlewood triples $(n, n+2, n+6)$ with almost-prime values. In fact, if we define
\begin{equation}
\pi_{1,a,b}(x):= \left|\left\{p : p \leqslant x, p+2=P_{a}, p+6=P_{b}\right\}\right|
\end{equation}
and
\begin{equation}
D_{1,a,b}(N):= \left|\left\{p : p \leqslant N, N-p=P_{a}, p+6=P_{b}\right\}\right|,
\end{equation}
then a special case of Hardy-Littlewood conjecture states that $\pi_{1,1,1}(x)$ should be asymptotic to $\frac{C_3 x}{(\log x)^3}$, where
\begin{equation}
C_3=\frac{9}{2} \prod_{p>3}\left(1-\frac{3 p-1}{(p-1)^{3}}\right) \approx 2.86259.
\end{equation}
In 2015, Heath-Brown and Li \cite{HBL2016} proved that $\pi_{1,2,76}(x) \gg \frac{C_3 x}{(\log x)^3}$ and Cai \cite{Cai2017} improved this result to $\pi_{1,2,14}(x) \gg \frac{C_3 x}{(\log x)^3}$ by using a delicate sieve process later. Their results refined Chen's theorem. Like Wu's generalization of Chen's theorem, we may conjecture that $\pi_{1,3,r}(x)$ should be asymptotic to $\frac{C_3 x\log \log x}{(\log x)^3}$ for some large $r$.
Very recently, Li and Liu \cite{ljm} proved $\pi_{1,3,6}(x) \gg \frac{C_3 x}{(\log x)^3}$. They also got $\pi_{1,3,3}(x) \gg \frac{C_3 x}{(\log x)^3}$ by assuming GEH(0.99). In this paper, we improve their asymptotic estimates of $\pi_{1,3,r}(x)$ on the orders by fixing some small prime factors $q$ and prove that:
\begin{theorem}\label{t1}
For every integer $a \geqslant 2$ and $b \geqslant 14$, we have
$$
\pi_{1,a,b}(x) \gg \frac{C_3 x}{(\log x)^3}(\log \log x)^{a-2}
\quad and \quad 
D_{1,a,b}(N) \gg \frac{N}{(\log N)^3}(\log \log N)^{a-2},
$$
where $\pi_{1,a,b}(x)$ and $D_{1,a,b}(N)$ are defined above.
\end{theorem}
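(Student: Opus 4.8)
The plan is to deduce Theorem~\ref{t1} from the already established Chen--Cai type estimates
$$
\pi_{1,2,b}(x)\gg\frac{C_3x}{(\log x)^3},\qquad D_{1,2,b}(N)\gg\frac{N}{(\log N)^3}\qquad(b\geqslant 14),
$$
the first being Cai's theorem \cite{Cai2017} and the second a Goldbach counterpart obtained by the same circle of ideas used in \cite{HBL2016,Cai2017}, and then to run the amplification device of Wu and Kan \cite{WU90,Kan1,Kan2} on the $P_a$-component. Since a $P_2$ is in particular a $P_a$, the case $a=2$ is exactly the above (with $(\log\log x)^{0}=1$), so from now on assume $a\geqslant 3$.

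Fix a small $\eta>0$, set $z_1:=\exp\!\big((\log x)^{1/10}\big)$ and $z_2:=x^{\eta/(a-2)}$, so that every squarefree integer with $a-2$ prime factors all lying in $[z_1,z_2]$ is at most $x^{\eta}$. For such a number $d=q_1q_2\cdots q_{a-2}$ with $z_1\leqslant q_1<q_2<\cdots<q_{a-2}\leqslant z_2$, let $\cA_d(x)$ denote the number of primes $p\leqslant x$ with $p\equiv-2\pmod{d}$, with $(p+2)/d$ equal to a $P_2$ all of whose prime factors exceed $z_2$, and with $p+6=P_b$. Any $p$ counted by $\cA_d(x)$ satisfies $p+2=d\cdot P_2=P_a$ and $p+6=P_b$, hence is counted by $\pi_{1,a,b}(x)$; moreover, since the $q_i$ are strictly increasing and all strictly below the least prime factor of $(p+2)/d$, the tuple $(q_1,\dots,q_{a-2})$ can be recovered from $p$, so distinct tuples count disjoint sets of primes $p$. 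Therefore
$$
\pi_{1,a,b}(x)\;\geqslant\;\sum_{z_1\leqslant q_1<\cdots<q_{a-2}\leqslant z_2}\cA_{q_1\cdots q_{a-2}}(x),
$$
and replacing $p+2$ by $N-p$ everywhere yields the corresponding lower bound for $D_{1,a,b}(N)$.

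The analytic core is the assertion that, uniformly over squarefree $d\leqslant x^{\eta}$ whose prime factors lie in $[z_1,z_2]$,
$$
\cA_d(x)\;\gg\;\frac{\mathfrak{S}(d)}{\varphi(d)}\cdot\frac{C_3x}{(\log x)^3},\qquad \mathfrak{S}(d)=\prod_{p\mid d}\Big(1+O\big(\tfrac1p\big)\Big)=1+o(1),
$$
with implied constants independent of $d$. This should follow by running Cai's proof of $\pi_{1,2,b}(x)\gg C_3x/(\log x)^3$ with $p$ confined to the progression $p\equiv-2\pmod{d}$: one applies the linear sieve together with Chen's switching principle to the reduced variable $(p+2)/d$ (which is sieved of prime factors only up to a fixed power of $x$ exceeding $z_2$, so that the prime factors of $(p+2)/d$ automatically lie above $z_2$), one keeps the sieve weight handling the $p+6=P_b$ condition exactly as in the unramified case, and the Type~I and Type~II inputs come from the Bombieri--Vinogradov theorem and its bilinear refinements now carrying the additional modulus $d$, which is harmless because $d\leqslant x^{\eta}$ with $\eta$ small leaves the level of distribution a sufficient power of $x$. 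Since $p\equiv-2\pmod{q_i}$ forces $p+6\equiv4\pmod{q_i}$, no new common-factor obstruction appears, and the singular series of the triple is reduced only by the benign factor $\mathfrak{S}(d)$.

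Granting this, the proof is completed by summation. Because $\mathfrak{S}(d)/\varphi(d)=(1+o(1))/d$ and $\sum_{z_1\leqslant q\leqslant z_2}1/q=\log\log z_2-\log\log z_1+o(1)\gg\log\log x$, one obtains
$$
\pi_{1,a,b}(x)\;\gg\;\frac{C_3x}{(\log x)^3}\sum_{z_1\leqslant q_1<\cdots<q_{a-2}\leqslant z_2}\frac{1}{q_1\cdots q_{a-2}}\;\gg\;\frac{C_3x}{(\log x)^3}\Big(\sum_{z_1\leqslant q\leqslant z_2}\frac1q\Big)^{a-2}\;\gg\;\frac{C_3x}{(\log x)^3}(\log\log x)^{a-2},
$$
and the same computation applied to the Goldbach base case gives $D_{1,a,b}(N)\gg N(\log\log N)^{a-2}/(\log N)^3$. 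The principal obstacle is precisely the uniform lower bound for $\cA_d(x)$: one has to check that every ingredient of Chen's double sieve --- the Type~II estimate, the interchange of roles in the switching principle, and the disposal of the exceptional set on which $(p+2)/d$ or $p+6$ acquires a prime factor in the forbidden middle range --- survives the restriction $p\equiv-2\pmod{d}$ for $d$ as large as a small fixed power of $x$, with all implied constants independent of $d$; the remaining steps are the bookkeeping of the Wu--Kan amplification and Mertens-type estimates for sums of reciprocals of primes.
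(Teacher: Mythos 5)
Your strategy coincides with the paper's: fix $a-2$ small primes dividing $p+2$, run the Heath-Brown--Li/Cai two-dimensional sieve on the pair $\bigl((p+2)/d,\,p+6\bigr)$ inside the progression $p\equiv-2\ (\mathrm{mod}\ d)$, and sum over the small primes in the style of Wu and Kan to gain the factor $(\log\log x)^{a-2}$; your bookkeeping (unique recovery of the tuple $(q_1,\dots,q_{a-2})$, $\mathfrak{S}(d)/\varphi(d)=(1+o(1))/d$, Mertens sums) is sound and matches what the paper does when it ``replaces $q$ by products of small primes.'' The problem is that what you call the analytic core is not an ingredient you may quote: it \emph{is} the content of the paper. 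The entire weighted sieve decomposition (Lemma 3.1, with the exponents $1/13$, $1/8.4$, $1/3.145$, $1/3.81$), the role-reversal estimates for $S_4,S_5,S_6,S_7$ via the sets $\mathcal{E}$ and $\mathcal{M}_k$, and the numerical integrations are exactly the verification that Cai's $b=14$ argument survives the relativization, and the final margin is only about $0.4\%$ ($13.05$ against totals near $3190$), so ``Cai's proof goes through with negligible loss'' is precisely the assertion that has to be checked quantitatively, not a remark one can defer.

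Moreover, the way you formulate the core estimate contains a genuine technical flaw: you claim $\mathcal{A}_d(x)\gg \frac{C_3x}{\varphi(d)(\log x)^3}$ for \emph{each individual} squarefree $d\leqslant x^{\eta}$, with constants independent of $d$, and you justify the remainder terms by ``Bombieri--Vinogradov now carrying the additional modulus $d$.'' For a single $d$ of size a power of $x$ this does not work: Bombieri--Vinogradov bounds the remainder summed over all moduli by $x(\log x)^{-A}$, and restricting to moduli divisible by the fixed $d$ gives nothing better, whereas the main term you must beat is of size $x/\bigl(\varphi(d)(\log x)^{3}\bigr)$, which is far smaller when $d$ is a power of $x$. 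An individual-modulus statement of this strength would require a fixed-divisor Bombieri--Vinogradov theorem with a $1/\varphi(d)$ saving, which is not available uniformly in $d\leqslant x^{\eta}$. The repair --- and the way the paper (like Wu and Kan) actually proceeds --- is to keep the summation over the small prime factors inside the sieve remainder: all main terms carry $1/\varphi(q)$, the remainders are $R(qd_1d_2)$ with $qd_1d_2\leqslant x^{1/2-\varepsilon}$, and one applies Bombieri's theorem together with a $\tau$-weighted Cauchy--Schwarz argument to the combined moduli, obtaining a single error $O\bigl(x(\log x)^{-5}\bigr)$ \emph{after} summing over $q$. In other words, you must prove a lower bound for $\sum_d\mathcal{A}_d(x)$ directly rather than for each $\mathcal{A}_d(x)$ separately; as written, your reduction step is not justified, and the subsequent summation argument rests on it.
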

By similar arguments, we also obtain those theorems:
\begin{theorem}\label{t2}
For every integer $a \geqslant 3$ and $b \geqslant 6$, we have
$$
\pi_{1,a,b}(x) \gg \frac{C_3 x}{(\log x)^3}(\log \log x)^{a-3}
\quad and \quad 
D_{1,a,b}(N) \gg \frac{N}{(\log N)^3}(\log \log N)^{a-3}.
$$
\end{theorem}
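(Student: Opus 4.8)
The plan is to deduce Theorem~\ref{t2} from its case $a = 3$ --- which is exactly the Li--Liu estimate $\pi_{1,3,6}(x)\gg C_3 x/(\log x)^3$, together with its companion $D_{1,3,6}(N)\gg N/(\log N)^3$ furnished by the same method --- via the now-standard device, due to Wu \cite{WU90} and Kan \cite{Kan1,Kan2}, of prescribing a bounded number of medium-sized prime factors. Fix $a \ge 4$ and put $I_x := \bigl(\exp((\log x)^{1/3}),\, \exp((\log x)^{1/2})\bigr]$. To each increasing tuple of primes $\pi_1 < \pi_2 < \cdots < \pi_{a-3}$ drawn from $I_x$ I attach $E := \pi_1\cdots\pi_{a-3}$, which is squarefree, coprime to $6$, and of size $E \le \exp\bigl((a-3)(\log x)^{1/2}\bigr) = x^{o(1)}$. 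I shall count only primes $p \le x$ for which $E \mid p+2$, the quotient $m := (p+2)/E$ is a $P_3$ all of whose prime factors exceed $x^{1/u}$ (where $u$ is the fixed exponent produced by the Li--Liu weighted sieve; since every prime of $E$ is $\le \exp((\log x)^{1/2}) \ll x^{1/u}$ this forces $\gcd(m,E)=1$), and $p+6$ is a $P_6$. For such $p$ one has $\Omega(p+2) = (a-3)+\Omega(m) \le a$, so $p+2 = P_a$ and $p+6$ is a $P_b$ for every $b \ge 6$; moreover $E$ is recovered from $p$ as the product of the prime factors of $p+2$ lying in $I_x$ (those of $m$ being larger than every element of $I_x$), so distinct tuples count disjoint sets of $p$. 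Hence, writing $N_E$ for the number of $p$ attached to a given $E$,
\[
\pi_{1,a,b}(x)\ \ge\ \sum_{E} N_E\qquad(a\ge 4,\ b\ge 6),
\]
the sum extending over all squarefree products of $a-3$ primes from $I_x$.

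For each fixed $E$ I would estimate $N_E$ from below by rerunning the Li--Liu argument. Writing $p = Em-2$, the quantity $N_E$ counts $m \le X := (x+2)/E$ with $Em-2$ prime, $m$ a $P_3$ with all prime factors above $x^{1/u}$, and $Em+4$ a $P_6$; this is precisely the three-dimensional weighted-sieve problem Li and Liu solve in the case $E = 1$, now with the forms $Em-2,\ m,\ Em+4$ replacing $n,\ n+2,\ n+6$. Carrying $E$ along only removes the sieve conditions at the primes $\ell \mid E$ --- for those $\ell$ (each $\ge 5$) one has $Em-2 \equiv -2$ and $Em+4 \equiv 4 \not\equiv 0\pmod\ell$ --- so the argument survives and, if anything, improves. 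Run uniformly over $E = x^{o(1)}$ it should yield
\[
N_E\ \gg\ \mathfrak{S}(E)\,\frac{X}{(\log X)^3}\ =\ \mathfrak{S}(E)\,\frac{x}{E\,(\log x)^3}\,\bigl(1+o(1)\bigr),
\qquad
\mathfrak{S}(E)\ =\ C_3\prod_{\ell\mid E}\frac{\ell-1}{\ell-3},
\]
the local factor of the singular series at $\ell\mid E$ being $(1-1/\ell)^{-2}$ in place of the generic $(1-3/\ell)(1-1/\ell)^{-3}$, which produces the correction $\prod_{\ell\mid E}\frac{\ell-1}{\ell-3}$ relative to $C_3$; as every $\ell\mid E$ exceeds $\exp((\log x)^{1/3})$, in fact $\mathfrak{S}(E) = C_3(1+o(1))$ uniformly.

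Summing over $E$ and invoking Mertens' theorem,
\[
\sum_{E}\frac{\mathfrak{S}(E)}{E}\ =\ \frac{C_3}{(a-3)!}\Bigl(\sum_{\pi\in I_x}\frac1\pi\Bigr)^{a-3}\bigl(1+o(1)\bigr)\ =\ \frac{C_3}{(a-3)!}\Bigl(\tfrac16\log\log x\Bigr)^{a-3}\bigl(1+o(1)\bigr),
\]
since $\sum_{\pi\in I_x}\frac1\pi = (\tfrac12-\tfrac13)\log\log x + o(\log\log x)$ and the tuples with a repeated prime contribute a lower-order amount. Combining the three displays gives $\pi_{1,a,b}(x)\gg \frac{C_3 x}{(\log x)^3}(\log\log x)^{a-3}$ for all $a\ge 3$, $b\ge 6$ (the case $a=3$ being the Li--Liu bound itself). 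The estimate for $D_{1,a,b}(N)$ follows in the same fashion: extract the $a-3$ medium primes from $N-p$ rather than from $p+2$, start from $D_{1,3,6}(N)\gg N/(\log N)^3$, and note that the singular series now carries an additional dependence on $N$ which is bounded between positive constants and hence absorbed into the implied constant --- this is why no $C_3$ appears in that bound.

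I expect the main obstacle to be the \emph{uniformity in the auxiliary modulus $E$} of the Li--Liu estimate. One must check that their Bombieri--Vinogradov inputs --- in particular the bilinear remainder terms treated through Chen's switching principle, which after switching concern primes in arithmetic progressions to moduli of the shape $E p_1 p_2$ --- remain admissible with the extra factor $E$ present, and that the combinatorial (Buchstab) inequalities still close. Both hold because $E = x^{o(1)}$ is negligible against the available level of distribution, but verifying this is the delicate point, and it is also what dictates taking the upper end of $I_x$ to be $\exp((\log x)^{1/2})$ rather than a fixed power of $x$.
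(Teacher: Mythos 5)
Your plan is essentially the paper's own argument: the paper proves this theorem by exactly the Wu--Kan device you describe, fixing auxiliary small prime factors $q_i<x^{\varepsilon}$ of $p+2$, rerunning the two-dimensional weighted sieve with the modulus carried through (main terms $\pi(x)/\varphi(q)$, sieve levels $(x/q)^{1/13}$, etc., with Li--Liu's parameters for $b\geqslant 6$), and summing $1/\varphi(q)$ over the auxiliary primes to gain each $\log\log$ factor. Your only deviations --- drawing the auxiliary primes from $\bigl(\exp((\log x)^{1/3}),\exp((\log x)^{1/2})\bigr]$ rather than from $q<x^{\varepsilon}$, and treating the $a=3$, $b=6$ case as a black box to be rerun with the modulus $E$ --- are cosmetic, and the uniformity in the auxiliary modulus that you flag as the delicate point is precisely what the paper's detailed computation (carried out for the Theorem~\ref{t1} prototype) verifies.
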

\begin{theorem}\label{t3}
For every integer $a \geqslant 4$ and $b \geqslant 5$, we have
$$
\pi_{1,a,b}(x) \gg \frac{C_3 x}{(\log x)^3}(\log \log x)^{a-4}
\quad and \quad 
D_{1,a,b}(N) \gg \frac{N}{(\log N)^3}(\log \log N)^{a-4}.
$$
\end{theorem}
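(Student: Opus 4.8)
The plan is to obtain Theorem~\ref{t3} from its base case $a=4$, namely the two estimates $\pi_{1,4,5}(x) \gg C_3 x/(\log x)^3$ and $D_{1,4,5}(N) \gg N/(\log N)^3$, and then to recover the exponent $(\log\log x)^{a-4}$ for $a>4$ by the small-prime-factor device of Wu \cite{WU90} and Kan \cite{Kan1,Kan2}. For the base case I would run the same weighted-sieve architecture as in Chen's theorem and its refinements by Heath-Brown--Li \cite{HBL2016}, Cai \cite{Cai2017} and Li--Liu \cite{ljm}: one works with a weighted count over $n \le x$ with $n$, $n+2$ and $n+6$ all free of small prime factors, applies Chen's switching principle to upgrade the sieve conclusion to ``$n$ is prime'', and removes the configurations in which $n+2$ or $n+6$ carries too many prime factors by Richert-type weights of the shape $1 - \tfrac12 \sum_{q \mid n+2,\ q \in I_1} 1 - \tfrac12 \sum_{q \mid n+6,\ q \in I_2} 1$ with $I_1, I_2$ carefully chosen intervals of medium-sized primes. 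The quantitative point is that allowing $n+2$ to be a $P_4$ rather than a $P_3$ relaxes the relevant weighted-sieve inequality (fewer prime factors of $n+2$ need to be removed, and the $P_4$ condition costs less sieving), and this slack is exactly what is needed to lower the bound on the third component from $b=6$ to $b=5$.

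The technical core is thus a combined linear sieve using the Bombieri--Vinogradov level of distribution $\vartheta = 1/2$ for shifted primes (supplemented, exactly as in Chen's original argument and in Cai's refinement, by bilinear estimates that in effect push certain error terms slightly past $x^{1/2}$), the switching principle, and Richert's weights attached to both $n+2$ and $n+6$. After optimising the various cut-off parameters one is left with a finite numerical problem --- a linear-programming/numerical-integration computation of precisely the type carried out in \cite{Cai2017} and \cite{ljm} --- and I expect the only real obstacle to be this verification: confirming that, with the extra room afforded by the $P_4$ condition, the weighted count of $n \le x$ with $n$ prime, $n+2 = P_4$ and $n+6 = P_5$ stays bounded below by a positive multiple of $C_3 x/(\log x)^3$. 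One must also ensure that the two Richert weights do not force the two components to ``compete'' for the same level of distribution; as in the earlier papers this is arranged by sieving $n+2$ and $n+6$ up to complementary levels whose sum stays below the (bilinear-enhanced) distributional threshold.

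Granting the base case, the exponent $a-4$ is produced as in \cite{WU90} and \cite{Kan1,Kan2}. One writes $n+2 = q_1 q_2 \cdots q_{a-4}\, m$ with $q_1 \le q_2 \le \cdots \le q_{a-4}$ ranging over primes in a window such as $(\log x)^A < q_i \le \exp\bigl((\log x)^{1/2}\bigr)$ --- small enough that pulling the congruence $n \equiv -2 \pmod{q_1 \cdots q_{a-4}}$ out of the sequence does not spoil the level of distribution (the modulus being at most $\exp\bigl(O((\log x)^{1/2})\bigr)$), yet large enough that the cofactor's prime factors, all exceeding $x^{1/u}$, are automatically distinct from the $q_i$. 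For each admissible tuple the base-case sieve shows that the cofactor satisfies $m = P_4$ while $n$ is prime and $n+6 = P_5$ (so in particular $n+6 = P_b$ for every $b \ge 5$), hence $n+2 = P_a$; since the tuples contribute essentially independently, the outer sum $\sum_{q_1}\cdots\sum_{q_{a-4}} \prod_i q_i^{-1} \asymp (\log\log x)^{a-4}$, a product of Mertens-type sums, supplies the claimed saving, a mild inclusion--exclusion keeping the $q_i$ distinct and avoiding double counting.

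The Goldbach analogue $D_{1,a,b}(N) \gg N (\log N)^{-3} (\log\log N)^{a-4}$ comes out of the same scheme applied to $n \le N$ with $N-n$ in place of $n+2$, using the Bombieri--Vinogradov theorem for primes in arithmetic progressions modulo $N-n$ instead of for shifted primes; the switching principle, the Richert weights, the cut-off optimisation and the small-prime-factor device are all unchanged, the only difference being that one no longer tracks the full singular series (which is why the stated lower bound carries an unspecified positive constant rather than a $C_3$-type product). Consequently both halves of Theorem~\ref{t3} are established simultaneously.
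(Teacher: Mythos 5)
Your proposal takes essentially the same route as the paper: a Chen-style weighted sieve with role reversal (switching) at Bombieri--Vinogradov level of distribution for the base configuration $(a,b)=(4,5)$, combined with the Wu--Kan small-prime-factor device (the paper's parameter $q$, replaced by products of small primes) whose Mertens-type sums supply the $(\log\log x)^{a-4}$ factor, which is exactly how the paper reduces Theorem~\ref{t3} to a modification of its detailed $\pi_{1,3,14}$ argument. Be aware only that, just like the paper itself --- which carries out the sieve decomposition and numerics solely for $\pi_{1,3,14}$ and asserts the remaining theorems ``by similar arguments'' --- you leave the decisive numerical verification for the $(4,5)$ case unexecuted, so your write-up matches the paper's level of detail rather than supplying the missing computation.
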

We also prove some conditional results:
\begin{theorem}\label{t4}
For every integer $a \geqslant 2$ and $b \geqslant 4$, assuming GEH(0.99), we have
$$
\pi_{1,a,b}(x) \gg \frac{C_3 x}{(\log x)^3}(\log \log x)^{a-2}
\quad and \quad 
D_{1,a,b}(N) \gg \frac{N}{(\log N)^3}(\log \log N)^{a-2}.
$$
\end{theorem}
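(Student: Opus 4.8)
The plan is to run essentially the argument used for Theorem~\ref{t1}, but with the unconditional level-of-distribution input (Bombieri--Vinogradov, exponent $\tfrac12$) replaced throughout by the hypothesis GEH(0.99), which supplies a level of distribution $0.99$ for the sequences $p+2$ and $p+6$ and for the convolution sums that appear once the $P_{a}$- and $P_{b}$-detecting weights are expanded. This extra room is precisely what lets the admissible value of $b$ in the base case fall from $14$ (Theorem~\ref{t1}) to $4$, while keeping $a=2$; the passage from $a=2$ to general $a$ is then the same as there.

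\emph{Base case $a=2$.} First I would establish that, under GEH(0.99),
$$\pi_{1,2,4}(x)\gg\frac{C_{3}x}{(\log x)^{3}}\qquad\text{and}\qquad D_{1,2,4}(N)\gg\frac{N}{(\log N)^{3}}.$$
As in \cite{HBL2016,Cai2017,ljm}, I would set up a lower-bound weighted sieve: a Chen/Richert-type weight on $p+2$ forcing $p+2=P_{2}$ (with Chen's switching principle recovering the would-be two-prime-factor loss), together with a Richert weight on $p+6$ forcing $p+6=P_{4}$. Feeding the individual pieces through the linear sieve with level of distribution $0.99$ leaves a strictly positive main term, and the standard singular-series bookkeeping recovers the constant $C_{3}$. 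Every error term that arises is of a shape covered by GEH(0.99) --- a bilinear sum of sieve coefficients against $\pi(x;d,c)$, or against the number of representations of a residue class by an auxiliary convolution of bounded length --- hence $O_{A}\!\bigl(x(\log x)^{-A}\bigr)$ and negligible.

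\emph{Amplification to $a\geq3$.} With the base case available I would insert fixed small prime factors, in the style of Wu~\cite{WU90} and Kan~\cite{Kan2}. Choose intervals $I_{j}=[y_{j},y_{j+1})$ for $1\leqslant j\leqslant a-2$, with $y_{1}$ a large absolute constant and $y_{a-1}=x^{\eta}$ for a small fixed $\eta>0$, spaced so that $\log\log y_{j+1}-\log\log y_{j}\gg\log\log x$. Then, counting each admissible $p$ with multiplicity $O_{a}(1)$,
$$\pi_{1,a,b}(x)\;\gg\;\sum_{\substack{q_{1}\in I_{1},\,\ldots,\,q_{a-2}\in I_{a-2}\\ q_{1},\ldots,q_{a-2}\ \text{prime}}}\#\bigl\{\,p\leqslant x:\ p+2=q_{1}\cdots q_{a-2}\,P_{2},\ p+6=P_{b}\,\bigr\},$$
where the inner cardinality is estimated by the base-case argument applied to the sequence $p\equiv-2\pmod{q_{1}\cdots q_{a-2}}$ --- fixing these divisors only multiplies the relevant singular series by a convergent product $\prod_{j}\bigl(1+O(q_{j}^{-1})\bigr)$ --- which gives $\gg C_{3}x\,(q_{1}\cdots q_{a-2})^{-1}(\log x)^{-3}$ uniformly in the $q_{j}$. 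Since $\sum_{q\in I_{j}}q^{-1}=\log\log y_{j+1}-\log\log y_{j}+o(1)\gg\log\log x$, the $(a-2)$-fold sum contributes $\gg(\log\log x)^{a-2}$, which is the first assertion of Theorem~\ref{t4}. The Goldbach statement follows by the same steps with $p+2$ replaced by $N-p$ (its singular series being $\gg1$), so that $D_{1,a,b}(N)\gg N(\log N)^{-3}(\log\log N)^{a-2}$.

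The crux --- and the only point where the full strength of GEH(0.99) is needed --- is the base case, and inside it the verification that the weighted sieve closes at $b=4$: one must check the governing numerical inequality, a Buchstab/Richert iteration for the $p+6$ variable dovetailed with Chen's switching argument for the $p+2$ variable, at level $0.99$ rather than $\tfrac12$, and confirm that the main term stays positive with the $P_{2}$- and $P_{4}$-weights both active. A secondary technical point is keeping this lower bound uniform over the fixed divisors $q_{1}\cdots q_{a-2}\leqslant x^{\eta}$, so that the attached error terms remain within reach of GEH(0.99); this is handled by taking $\eta$ small, since the level budget $0.99$ exceeds $\eta$ plus the bilinear requirements with room to spare.
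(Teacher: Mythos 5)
Your plan coincides with the paper's own route to Theorem~\ref{t4}: the paper obtains the conditional cases by rerunning the same Chen--Cai style double-variable weighted sieve (Lemma~\ref{l1} together with Chen's role-reversal sums) with GEH(0.99) supplying the level of distribution in place of Bombieri--Vinogradov, and by building the fixed small prime divisors of $p+2$ into the sieved sets so that summing $1/\varphi(q_1\cdots q_{a-2})$ over small primes produces the $(\log\log x)^{a-2}$ factor --- which is exactly your base-case-plus-amplification scheme (your count of $a-2$ inserted primes is the consistent one). As in the paper, which only works the unconditional $(3,14)$ case in detail and invokes ``similar arguments'' for this theorem, you flag but do not carry out the decisive numerical verification that the weighted sieve remains positive with $b=4$ at level $0.99$, uniformly in the moduli $q_1\cdots q_{a-2}\leqslant x^{\varepsilon}$; that computation is where essentially all of the remaining work lies.
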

\begin{theorem}\label{t5}
For every integer $a \geqslant 3$ and $b \geqslant 3$, assuming GEH(0.99), we have
$$
\pi_{1,a,b}(x) \gg \frac{C_3 x}{(\log x)^3}(\log \log x)^{a-3}
\quad and \quad 
D_{1,a,b}(N) \gg \frac{N}{(\log N)^3}(\log \log N)^{a-3}.
$$
\end{theorem}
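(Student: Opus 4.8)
The plan is to derive Theorem \ref{t5} from the conditional bound $\pi_{1,3,3}(x)\gg C_3x/(\log x)^3$ of Li and Liu \cite{ljm} by attaching the small-prime-extraction device of Wu \cite{WU90} and Kan \cite{Kan1,Kan2}, exactly in the way Theorem \ref{t2} is obtained from the unconditional estimate for $\pi_{1,3,6}(x)$; here GEH(0.99) is used only to sieve $p+6$ down to a $P_3$ rather than to a $P_6$, so the structural part of the argument coincides with that of Theorems \ref{t1}--\ref{t3}.

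First I would reduce to the case $b=3$ (a larger $b$ is a weaker requirement) and fix $a\ge 3$. I set $z=\log x$ and $y=x^{\eta}$ for a small constant $\eta=\eta(a)>0$ to be pinned down later, and split $(z,y]$ into $a-3$ consecutive subintervals $I_1,\dots,I_{a-3}$, each still carrying $\sum_{q\in I_j}1/q\gg_a\log\log x$ by Mertens' theorem. To each tuple of primes $(q_1,\dots,q_{a-3})$ with $q_j\in I_j$ I associate $Q=q_1\cdots q_{a-3}$; the $q_j$ are automatically distinct and $Q\le x^{\eta(a-3)}$. I then count primes $p\le x$ subject to
\[
p\equiv -2\pmod Q,\qquad (p+2)/Q=m\ \text{with every prime factor}>y\ \text{and}\ \Omega(m)\le 3,\qquad p+6=P_3 .
\]
Every such $p$ has $\Omega(p+2)=(a-3)+\Omega(m)\le a$, hence gives a genuine triple $(p,P_a,P_b)$ with $b=3$; and since the factorisation $p+2=q_1\cdots q_{a-3}\,m$ is forced by $p$, distinct tuples produce distinct primes, so $\pi_{1,a,b}(x)$ is bounded below by the sum of these inner counts over all admissible tuples.

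The crux is to show that each inner count is $\gg\varphi(Q)^{-1}C_3x/(\log x)^3$, \emph{uniformly} in the tuple. For a single $Q$ this is the estimate Li and Liu prove for $\pi_{1,3,3}$: run their two-dimensional weighted Chen--Richert sieve on the two shifted sequences coming from $(p+2)/Q$ and from $p+6$, with $p\le x$ restricted to $p\equiv-2\pmod Q$, using Chen's switching principle on \emph{both} coordinates to remove the would-be $P_4$ contributions, and check that the switching losses leave the singular series, hence the constant $C_3$, intact. The only new feature is that the fixed modulus $Q$ enters the remainder terms multiplicatively, so one needs GEH(0.99) for moduli up to about $Q$ times the sieve moduli; taking $\eta$ so small that $\eta(a-3)<\tfrac1{100}$ leaves an effective level of distribution $x^{0.99-\eta(a-3)}\ge x^{0.98}$, which still suffices to keep $p+6=P_3$. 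Summing over tuples and using $\varphi(Q)=\prod_j\varphi(q_j)$ and Mertens then gives
\[
\pi_{1,a,b}(x)\gg\frac{C_3x}{(\log x)^3}\prod_{j=1}^{a-3}\Big(\sum_{q\in I_j}\frac1{\varphi(q)}\Big)\gg\frac{C_3x}{(\log x)^3}(\log\log x)^{a-3}.
\]
The bound for $D_{1,a,b}(N)$ follows by the identical argument with $\{N-p\}$ in place of $\{p+2\}$ (and $p+6=P_3$ unchanged), the only change being the $N$-dependent singular series, which is $\gg 1$; this yields $D_{1,a,b}(N)\gg N(\log\log N)^{a-3}/(\log N)^3$.

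The main obstacle will be precisely this uniformity in $Q$: one has to check that every ingredient of the Li--Liu argument --- the choice of Richert weights, the two switching steps, and above all the GEH(0.99) remainder bounds --- survives the insertion of the congruence $p\equiv-2\pmod Q$ with constants independent of the tuple, and that the $\eta$ forced on us (to keep $Q$ small enough for GEH(0.99) to still deliver a $P_3$ on $p+6$) is still large enough for the $I_j$ to carry mass $\gg\log\log x$ --- which it is, since $a$ is fixed. Once that is in hand, the remainder is the same bookkeeping already carried out for Theorems \ref{t1}--\ref{t3}.
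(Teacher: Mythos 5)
Your proposal is correct and follows essentially the same route as the paper: fix $a-3$ small primes dividing $p+2$ via the congruence $p\equiv-2\pmod Q$, rerun the two-dimensional weighted (Chen/Cai--Li--Liu) sieve on the pair $\left((p+2)/Q,\,p+6\right)$ with GEH(0.99) absorbing the extra modulus $Q\le x^{\eta(a-3)}$ uniformly, and sum $1/\varphi(Q)$ over the small primes to produce the $(\log\log x)^{a-3}$ factor --- exactly the ``replace $q$ by a product of small primes $<x^{\varepsilon}$'' device the paper uses for Theorems~\ref{t1}--\ref{t5}. Your use of disjoint ranges $I_j$ to guarantee distinctness of tuples is a harmless variant of the paper's implicit bookkeeping, and the level-of-distribution slack you invoke is the same $\varepsilon$-loss the paper absorbs into its strict numerical inequalities.
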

While calculating, we find that it seems hard to improve our Theorems~\ref{t1}--\ref{t5}. (For example, you need to get an improvement about 45 percent on the sieve process to replace the condition $b \geqslant 14$ by $b \geqslant 13$ in our Theorem 1.)

In this paper, we only provide a detailed proof of $\pi_{1,3,14}(x) \gg \frac{C_3 x \log \log x}{(\log x)^3}$, which is a simple version of Theorem~\ref{t1}. The readers can modify our proof to get Theorems~\ref{t1}--\ref{t5}.

\section{Preliminary lemmas}
Let $\mathcal{A}$ denote a finite set of positive integers, $\mathcal{P}$ denote an infinite set of primes, $q$ denote a prime number satisfies $q< x^\varepsilon$ and put
$$
\mathcal{A}=\left\{\frac{p+2}{q}: 7<p \leqslant x, p \equiv -2 (\bmod q), (p+6, P(z))=1\right\}, \quad \mathcal{P}=\{p : (p,q)=1\},
$$
$$
\mathcal{P}(r)=\{p : p \in \mathcal{P},(p, r)=1\}, \quad 
P(z)=\prod_{\substack{p\in \mathcal{P}\\p<z}} p,
$$
$$
\mathcal{A}_{d}=\{a : a \in \mathcal{A}, a \equiv 0(\bmod d)\},\quad
S(\mathcal{A}; \mathcal{P},z)=\sum_{\substack{a \in \mathcal{A} \\ (a, P(z))=1}} 1,
$$
$$
\mathcal{M}_k=\{n: n=p_1 p_2 \cdots p_k,  13<n \leqslant x+6, x^{0.005} \leqslant p_1 < \cdots < p_k, n \equiv 4 (\bmod q) \},
$$
$$
\mathcal{A}^{(k)}=\left\{\frac{p+2}{q} : 7<p \leqslant x, p \equiv -2 (\bmod q), p+6 \in \mathcal{M}_k\right\},
$$
$$
\mathcal{E}=\left\{qmp_1p_2p_3p_4: qmp_1p_2p_3p_4 \leqslant x+2, \left(\frac{x}{q}\right)^{\frac{1}{13}} \leqslant p_{1}<p_{2}<p_3<p_{4}<\left(\frac{x}{q}\right)^{\frac{1}{8.4}}, \left(m, qp_{1}^{-1} P\left(p_{2}\right)\right)=1\right\},
$$
$$
\mathcal{B}=\{n-2 : n \in \mathcal{E}\}, \quad
\mathscr{W}=\left\{\left\{\frac{p+2}{q}, p+6\right\}: 7<p \leqslant x, p \equiv -2 (\bmod q)\right\},
$$
$$
\mathscr{W}^{(1)}=\{\{n-2,n+4\}: n \in \mathcal{E}\}, \quad \mathscr{W}^{(2)}=\left\{\left\{n-6, \frac{n-4}{q}\right\}: n \in \mathcal{M}_k \right\}.
$$

\begin{lemma}\label{l1}
([\cite{Cai2017}, Lemma 1], deduced from \cite{HBL2016}, Proposition 1).
Let $\mathscr{W}$ be a finite subset of $\mathbb{N}^{2}$. Suppose that $z_{1}, z_{2} \geqslant 2$ with $\log z_{1} \asymp \log z_{2}$ and write $\mathbf{z}=\left\{z_{1}, z_{2}\right\}$. For $\mathbf{d}=\left\{d_{1}, d_{2}\right\}$ and $\mathbf{n}=\left\{n_{1}, n_{2}\right\}$, we write $\mathbf{d} \mid \mathbf{n}$ to mean that $d_{1} \mid n_{1}$ and $d_{2} \mid n_{2}$. Set
$$
\mathscr{W}_{\mathbf{d}}=\{\mathbf{n} \in \mathscr{W}: \mathbf{d} \mid \mathbf{n}\}, \quad S(\mathscr{W}, \mathbf{z})=\sum_{\substack{\left\{n_{1}, n_{2}\right\} \in \mathscr{W} \\ p|n_{1} \Rightarrow p \geqslant z_{1} \\ p| n_{2} \Rightarrow p \geqslant z_{2}}} 1 .
$$
Suppose that
$$
\left|\mathscr{W}_{\mathbf{d}}\right|=h(\mathbf{d}) X+R(\mathbf{d})
$$
for some $X>0$ independent of $\mathbf{d}$ and some multiplicative function $h(\mathbf{d}) \in$ $[0,1)$ such that $h(p, 1)+h(1, p)-1<h(p, p) \leqslant h(p, 1)+h(1, p)$ for all primes $p$, and
$$
h(p, 1), h(1, p) \leqslant c p^{-1}, \quad h(p, p) \leqslant c p^{-2}
$$
for some constant $c \geqslant 2$.

Let $h_{1}(d)=h(d, 1)$ and $h_{2}(d)=h(1, d)$. Suppose that
$$
\prod_{w \leqslant p<z}\left(1-h_{j}(p)\right)^{-1} \leqslant \frac{\log z}{\log w}\left(1+\frac{L}{\log w}\right) \quad(j=1,2)
$$
for $z \geqslant w \geqslant 2$ and some positive constant L. Then: 
$$
S(\mathscr{W}, \mathbf{z}) \leqslant X V\left(z_{0}, h^{*}\right) V_{1} V_{2} \left\{\left(F\left(s_{1}\right) F\left(s_{2}\right)\right)\left(1+O\left(\left(\log D_{1} D_{2}\right)^{-1 / 6}\right)\right)\right\}
$$
$$
+O_{\varepsilon}\left(\sum_{d_{1} d_{2} \leqslant\left(D_{1} D_{2}\right)^{1+\varepsilon}} \tau^{4}\left(d_{1} d_{2}\right)\left|R\left(\left\{d_{1}, d_{2}\right\}\right)\right|\right),
$$
$$
S(\mathscr{W}, \mathbf{z}) \geqslant X V\left(z_{0}, h^{*}\right) V_{1} V_{2}\left\{\left(f\left(s_{1}\right) F\left(s_{2}\right)+F\left(s_{1}\right) f\left(s_{2}\right)-F\left(s_{1}\right) F\left(s_{2}\right)\right)\left(1+O\left(\left(\log D_{1} D_{2}\right)^{-1 / 6}\right)\right)\right\}
$$
$$
+O_{\varepsilon}\left(\sum_{d_{1} d_{2} \leqslant\left(D_{1} D_{2}\right)^{1+\varepsilon}} \tau^{4}\left(d_{1} d_{2}\right)\left|R\left(\left\{d_{1}, d_{2}\right\}\right)\right|\right)
$$
for any $\varepsilon>0$, where
$$
z_{0}=\exp \left(\sqrt[3]{\log z_{1} z_{2}}\right), \quad s_{j}=\frac{\log D_{j}}{\log z_{j}} \quad(j=1,2),
$$
$$
V\left(z_{0}, h^{*}\right)=\prod_{p<z_{0}}\left(1-h^{*}(p)\right), \quad h^{*}(p)=h(p, 1)+h(1, p)-h(p, p),
$$
$$
V_{j}=\prod_{z_{0} \leqslant p<z_{j}}\left(1-h_{j}(p)\right) \quad(j=1,2) .
$$
and $\gamma$ denotes the Euler's constant, $f(s)$ and $F(s)$ are determined by the following differential-difference equation
\begin{align*}
\begin{cases}
F(s)=\frac{2 e^{\gamma}}{s}, \quad f(s)=0, \quad &0<s \leqslant 2,\\
(s F(s))^{\prime}=f(s-1), \quad(s f(s))^{\prime}=F(s-1), \quad &s \geqslant 2 .
\end{cases}
\end{align*}
\end{lemma}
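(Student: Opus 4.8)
The plan is to show that $S(\mathscr W,\mathbf z)$ agrees, up to a factor $1+O((\log D_1D_2)^{-1/6})$, with a \emph{product of two one-dimensional linear sieves}, the mild coupling between the two coordinates being absorbed into the single factor $V(z_0,h^*)$ via the Fundamental Lemma of sieve theory. First I would write $S(\mathscr W,\mathbf z)=\sum_{\mathbf n\in\mathscr W}A_1(\mathbf n)A_2(\mathbf n)$ with $A_j(\mathbf n)=\mathbf 1[\,p\mid n_j\Rightarrow p\geqslant z_j\,]=\sum_{d_j\mid(n_j,P(z_j))}\mu(d_j)$, and introduce the Rosser--Iwaniec upper/lower linear sieve weights $(\lambda^{\pm}_d)$ of level $D_j$ attached to the $j$-th coordinate: they are supported on squarefree $d\mid P(z_j)$ with $d\leqslant D_j$, are bounded by $1$, and satisfy $\sum_{d\mid m}\lambda^{-}_d\leqslant\mathbf 1[m=1]\leqslant\sum_{d\mid m}\lambda^{+}_d$ for every $m\mid P(z_j)$. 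Since $\sum_{d\mid m}\lambda^{+}_d\geqslant0$, one has the pointwise bound $A_1A_2\leqslant\bigl(\sum_{d_1\mid(n_1,P(z_1))}\lambda^{+}_{d_1}\bigr)\bigl(\sum_{d_2\mid(n_2,P(z_2))}\lambda^{+}_{d_2}\bigr)$, so after interchanging the order of summation
\[
S(\mathscr W,\mathbf z)\leqslant X\sum_{d_1,d_2}\lambda^{+}_{d_1}\lambda^{+}_{d_2}\,h(d_1,d_2)+\sum_{d_1\leqslant D_1,\ d_2\leqslant D_2}\bigl|R(\{d_1,d_2\})\bigr|,
\]
the last sum being dominated by $\sum_{d_1d_2\leqslant(D_1D_2)^{1+\varepsilon}}\tau^{4}(d_1d_2)\,|R(\{d_1,d_2\})|$ (since $\tau^{4}\geqslant1$).

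Next I would evaluate the bilinear main term. Split $P(z_j)=P(z_0)Q_j$ with $Q_j=\prod_{z_0\leqslant p<z_j}p$, and each squarefree $d_j\mid P(z_j)$ as $d_j=e_jt_j$ with $e_j\mid P(z_0)$ and $t_j\mid Q_j$. Every prime factor of $t_1,t_2$ exceeds $z_0=\exp(\sqrt[3]{\log z_1z_2})$, so the terms with $\gcd(t_1,t_2)>1$ contribute at most $O\bigl(X\sum_{p\geqslant z_0}h(p,p)\bigr)=O(X/z_0)$, which is negligible against the main term (using $h(p,p)\leqslant cp^{-2}$ and the fact that $z_0$ exceeds every fixed power of $\log z_1z_2$); on the complementary ``diagonal-free'' range $h(t_1,t_2)=h_1(t_1)h_2(t_2)$ factors and the $t$-sum becomes a product of two one-dimensional Rosser--Iwaniec sums, each of which, under the hypothesis $\prod_{w\leqslant p<z}(1-h_j(p))^{-1}\leqslant\frac{\log z}{\log w}(1+L/\log w)$, equals $V_j\bigl(F(s_j)+O((\log D_j)^{-1/6})\bigr)$. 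The residual part over $e_j\mid P(z_0)$ is a genuinely two-dimensional sieve of level $\leqslant z_0$; here one applies the Fundamental Lemma with the joint density $h^*(p)=h(p,1)+h(1,p)-h(p,p)$ (the ``$p\mid n_1$ or $p\mid n_2$'' density) --- the hypotheses $h(p,1)+h(1,p)-1<h(p,p)\leqslant h(p,1)+h(1,p)$ being exactly what force $0\leqslant h^*(p)<1$ --- and obtains the factor $\prod_{p<z_0}(1-h^*(p))=V(z_0,h^*)$ with relative error $O(\mathrm e^{-u})$, $u\asymp\log(z_1z_2)/\log z_0$, which the choice of $z_0$ again folds into $1+O((\log D_1D_2)^{-1/6})$. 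Multiplying the three pieces gives the stated upper bound with main term $XV(z_0,h^*)V_1V_2\,F(s_1)F(s_2)$.

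For the lower bound the naive product of two \emph{lower} sieves fails, since a product of two possibly-negative truncated M\"obius sums need not be a lower-bound sieve; instead I would use a switching argument. Writing $S(\mathscr W,\mathbf z)=\sum_{\mathbf n\in\mathscr W:\,A_2(\mathbf n)=1}A_1(\mathbf n)$ and using $\sum_{d_1\mid m_1}\lambda^{-}_{d_1}\leqslant A_1$ pointwise together with $A_2\in\{0,1\}$, summation over $\mathscr W$ yields
\[
S(\mathscr W,\mathbf z)\ \geqslant\ \sum_{d_1}\lambda^{-}_{d_1}\,S^{(d_1)},\qquad S^{(d_1)}=\#\{\mathbf n\in\mathscr W:\ d_1\mid n_1,\ p\mid n_2\Rightarrow p\geqslant z_2\}.
\]
For each $d_1$ the quantity $S^{(d_1)}$ is a one-dimensional linear sieve in the $n_2$-variable with density $h_2$ and mass $h_1(d_1)X$ (the diagonal $\gcd(d_1,d_2)>1$ corrections being controlled exactly as in the previous step), to be bounded from above by $h_1(d_1)XV_2(F(s_2)+o(1))$ on the range $\lambda^{-}_{d_1}>0$ and from below by $h_1(d_1)XV_2(f(s_2)+o(1))$ on the range $\lambda^{-}_{d_1}<0$; recombining these two ranges against the one-dimensional sieve asymptotics in the $d_1$-variable --- the whole argument being symmetric in the two coordinates --- promotes the crude lower bound $f(s_1)f(s_2)$ to the sharp combination $f(s_1)F(s_2)+F(s_1)f(s_2)-F(s_1)F(s_2)$. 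With the same small-prime factor $V(z_0,h^*)$ and remainder bound this gives the claimed lower bound.

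The main obstacle is precisely this last direction: extracting the \emph{sharp} lower-bound combination rather than the lossy $f(s_1)f(s_2)$, while at the same time (i) keeping the double-counted ``diagonal'' terms with $\gcd(d_1,d_2)>1$ harmless --- which is exactly what forces the two-scale cutoff at $z_0=\exp(\sqrt[3]{\log z_1z_2})$: large enough that a shared prime $\geqslant z_0$ costs only $O(X/z_0)$, yet small enough that the Fundamental Lemma disposes of shared primes $<z_0$ with comparable precision --- and (ii) verifying that the one-dimensional sieve errors $O((\log D_j)^{-1/6})$ and the Fundamental-Lemma error $O(\mathrm e^{-u})$ amalgamate, under $\log z_1\asymp\log z_2$, into the single factor $1+O((\log D_1D_2)^{-1/6})$. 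The numerology fixing the exponent $1/6$ is where the care concentrates; everything else is the routine machinery of the Rosser--Iwaniec linear sieve.
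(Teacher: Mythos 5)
First, note that the paper itself offers no proof of this lemma: it is quoted verbatim from Cai's Lemma~1, which in turn is deduced from Proposition~1 of Heath-Brown and Li, so the only fair comparison is with that source. Your upper-bound half and the general architecture do match it: a preliminary (Fundamental-Lemma) sieve on the primes below $z_{0}=\exp(\sqrt[3]{\log z_{1}z_{2}})$ with the joint density $h^{*}(p)=h(p,1)+h(1,p)-h(p,p)$ producing the factor $V(z_{0},h^{*})$, Rosser--Iwaniec linear sieves on each coordinate above $z_{0}$, the off-diagonal condition $h(p,p)\leqslant cp^{-2}$ making shared primes $\geqslant z_{0}$ negligible, and the convolved supports accounting for the $\tau^{4}(d_{1}d_{2})$ and the level $(D_{1}D_{2})^{1+\varepsilon}$ in the remainder. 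That part of your sketch is sound in outline.

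The genuine gap is in the lower bound, and you have in fact flagged the right spot yourself. Your ``switching'' argument is misstated: to bound $\sum_{d_{1}}\lambda^{-}_{d_{1}}S^{(d_{1})}$ from below you need a \emph{lower} bound on $S^{(d_{1})}$ (hence $f(s_{2})$) where $\lambda^{-}_{d_{1}}>0$ and an \emph{upper} bound (hence $F(s_{2})$) where $\lambda^{-}_{d_{1}}<0$ --- you have the two directions reversed --- and, more seriously, even with the correct directions the resulting expression involves the partial sums of $h_{1}(d_{1})\lambda^{-}_{d_{1}}$ restricted to each sign class of $\lambda^{-}_{d_{1}}$, which the one-dimensional linear-sieve asymptotics do not evaluate; the claim that ``recombining these two ranges'' promotes $f(s_{1})f(s_{2})$ to $f(s_{1})F(s_{2})+F(s_{1})f(s_{2})-F(s_{1})F(s_{2})$ is asserted, not derived, and I do not see how your route produces it. The missing idea is the Br\"udern--Fouvry vector-sieve inequality: with $\theta_{j}\in\{0,1\}$ the exact sifting indicators and $\theta_{j}^{-}\leqslant\theta_{j}\leqslant\theta_{j}^{+}$ the truncated sieve sums with $\theta_{j}^{+}\geqslant 0$, one has pointwise $\theta_{1}\theta_{2}\geqslant\theta_{1}^{-}\theta_{2}^{+}+\theta_{1}^{+}\theta_{2}^{-}-\theta_{1}^{+}\theta_{2}^{+}$; summing over $\mathscr{W}$ and applying the standard evaluations $\sum_{d}\lambda^{\mp}_{d}h_{j}(d)=V_{j}\left(f(s_{j})\ \text{or}\ F(s_{j})\right)(1+o(1))$ to each of the three bilinear terms (after the same $z_{0}$-splitting as in your upper bound) yields exactly the stated combination. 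This pointwise inequality, not a per-$d_{1}$ conditioning on the sign of $\lambda^{-}_{d_{1}}$, is how the cited Proposition~1 of Heath-Brown--Li obtains the lower bound, and without it (or an equivalent substitute) your sketch does not establish the second half of the lemma.
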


\begin{lemma}\label{l2} ([\cite{CAI867}, Lemma 2], deduced from \cite{HR74}).
\begin{align*}
F(s)=&\frac{2 e^{\gamma}}{s}, \quad 0<s \leqslant 3;\\
F(s)=&\frac{2 e^{\gamma}}{s}\left(1+\int_{2}^{s-1} \frac{\log (t-1)}{t} d t\right), \quad 3 \leqslant s \leqslant 5 ;\\
F(s)=&\frac{2 e^{\gamma}}{s}\left(1+\int_{2}^{s-1} \frac{\log (t-1)}{t} d t+\int_{2}^{s-3} \frac{\log (t-1)}{t} d t \int_{t+2}^{s-1} \frac{1}{u} \log \frac{u-1}{t+1} d u\right), \quad 5 \leqslant s \leqslant 7;\\
f(s)=&\frac{2 e^{\gamma} \log (s-1)}{s}, \quad 2 \leqslant s \leqslant 4 ;\\
f(s)=&\frac{2 e^{\gamma}}{s}\left(\log (s-1)+\int_{3}^{s-1} \frac{d t}{t} \int_{2}^{t-1} \frac{\log (u-1)}{u} d u\right), \quad 4 \leqslant s \leqslant 6 ;\\
f(s)=& \frac{2 e^{\gamma}}{s}\left(\log (s-1)+\int_{3}^{s-1} \frac{d t}{t} \int_{2}^{t-1}\frac{\log (u-1)}{u} d u\right.\\ & \left.+\int_{2}^{s-4} \frac{\log (t-1)}{t} d t \int_{t+2}^{s-2} \frac{1}{u} \log \frac{u-1}{t+1} \log \frac{s}{u+2} d u\right), \quad 6 \leqslant s \leqslant 8.
\end{align*}
\end{lemma}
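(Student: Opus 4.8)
By definition $F$ and $f$ are the (continuous) solutions of the linear sieve differential-difference system recalled in Lemma~\ref{l1}: $F(s)=2e^{\gamma}/s$ and $f(s)=0$ on $(0,2]$, together with $(sF(s))'=f(s-1)$ and $(sf(s))'=F(s-1)$ for $s\geqslant2$. Since each right-hand side involves only the value at the smaller argument $s-1$, the system determines $sF(s)$ and $sf(s)$ recursively on the successive unit intervals $[2,3],[3,4],[4,5],\dots$: on each interval one integrates a function already known from the previous one, the constant of integration being pinned down by continuity at the left endpoint. The plan is therefore to carry out these integrations one unit interval at a time, alternating between $F$ and $f$, and to reshape the resulting iterated integrals into the closed forms claimed.

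\emph{The first few steps.} For $s\in[2,3]$ we have $s-1\in[1,2]$, hence $f(s-1)=0$ and $(sF(s))'=0$; matching $sF(s)=2e^{\gamma}$ at $s=2$ gives $F(s)=2e^{\gamma}/s$ on $[2,3]$, hence on all of $(0,3]$. Next, for $s\in[2,4]$ we get $(sf(s))'=F(s-1)=2e^{\gamma}/(s-1)$, and integrating from $2$ with $f(2)=0$ yields $sf(s)=2e^{\gamma}\log(s-1)$. For $s\in[3,5]$, $(sF(s))'=f(s-1)=2e^{\gamma}\log(s-2)/(s-1)$; integrating from $3$ with $3F(3)=2e^{\gamma}$ and substituting $t=\sigma-1$ in the integral reproduces precisely the stated formula for $F$ on $[3,5]$. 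For $s\in[4,6]$, $(sf(s))'=F(s-1)$ with $F$ given by the $[3,5]$ formula; integrating from $4$ with $4f(4)=2e^{\gamma}\log3$, the constant $\log3$ cancels and the stated formula for $f$ on $[4,6]$ drops out. At each step the lower limit of integration combines with the initial constant exactly so as to reproduce the ``$1+\cdots$'' (for $F$) or ``$\log(s-1)+\cdots$'' (for $f$) shape.

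\emph{The last two steps, where the real work is.} For $F$ on $[5,7]$ one integrates $(sF(s))'=f(s-1)$ using the $[4,6]$ formula for $f$; with the substitution $v=\sigma-1$ this gives
\[
sF(s)-5F(5)=2e^{\gamma}\int_{4}^{s-1}\frac{1}{v}\left(\log(v-1)+\int_{3}^{v-1}\frac{d\tau}{\tau}\int_{2}^{\tau-1}\frac{\log(\rho-1)}{\rho}\,d\rho\right)dv.
\]
The term $\log(v-1)/v$, together with the $\int_{2}^{4}$ already present in $5F(5)$, telescopes into $\int_{2}^{s-1}\log(t-1)/t\,dt$, giving the first two summands of the claimed formula, while the remaining triple integral must be put into the stated nested shape by interchanging the order of integration --- pulling the innermost variable $\rho$ out front and doing the $\tau$-integration first, which turns $\iint v^{-1}\tau^{-1}$ over the relevant simplex into $\int u^{-1}\log\frac{u-1}{t+1}\,du$ after renaming. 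The analogous but heavier computation one step further produces $f$ on $[6,8]$: integrate $(sf(s))'=F(s-1)$ using the $[5,7]$ formula for $F$; the first two summands of $F(s-1)$, together with the value of $6f(6)$ supplied by the $[4,6]$ formula, recombine into $\log(s-1)+\int_{3}^{s-1}\frac{dt}{t}\int_{2}^{t-1}\frac{\log(u-1)}{u}\,du$, while the third (double-integral) summand of $F(s-1)$ yields a fourfold integral that must be reorganized --- again by Fubini --- into the advertised double integral carrying the extra logarithmic weight, the identity $\log\frac{u-1}{t+1}=\int_{t+2}^{u}\frac{d\xi}{\xi-1}$ being the convenient device to expose its structure. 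The genuinely delicate point throughout is this last reorganization: tracking the triangular and simplicial regions correctly through two successive applications of Fubini, and checking that the accumulated boundary constants collapse exactly to the stated closed forms. As a failsafe I would finish by differentiating each claimed expression and verifying directly that it satisfies $(sF(s))'=f(s-1)$, respectively $(sf(s))'=F(s-1)$, with the correct value at the left endpoint of its range; this catches any slip in the bookkeeping at once.
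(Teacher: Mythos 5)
The paper offers no argument of its own here --- Lemma~\ref{l2} is simply quoted from [CAI867, Lemma 2], ultimately from Halberstam--Richert --- so your route (solve the delay-differential system of Lemma~\ref{l1} interval by interval, then rearrange the iterated integrals by Fubini) is the standard and essentially the only derivation, and it is carried out correctly for the first five formulas: your steps for $F$ on $(0,3]$, $f$ on $[2,4]$, $F$ on $[3,5]$, $f$ on $[4,6]$ and $F$ on $[5,7]$ (including the interchange giving the weight $\log\frac{u-1}{t+1}$ with limits $2\leqslant t\leqslant s-3$, $t+2\leqslant u\leqslant s-1$) all check out.

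The gap is in the very last step, which you describe only in outline and assert ``lands on the advertised double integral.'' It does not. Integrating $(sf(s))'=F(s-1)$ from $6$ to $s$ and applying Fubini to the triple term, the $\sigma$-integration for fixed $(t,u)$ runs over $u+2\leqslant\sigma\leqslant s$ and produces $\int_{u+2}^{s}\frac{d\sigma}{\sigma-1}=\log\frac{s-1}{u+1}$, so the computation yields
\begin{equation*}
f(s)=\frac{2e^{\gamma}}{s}\left(\log(s-1)+\int_{3}^{s-1}\frac{dt}{t}\int_{2}^{t-1}\frac{\log(u-1)}{u}\,du+\int_{2}^{s-4}\frac{\log(t-1)}{t}\,dt\int_{t+2}^{s-2}\frac{1}{u}\log\frac{u-1}{t+1}\log\frac{s-1}{u+1}\,du\right)
\end{equation*}
for $6\leqslant s\leqslant 8$, with weight $\log\frac{s-1}{u+1}$, not the $\log\frac{s}{u+2}$ printed in the statement. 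These are genuinely different (for $u<s-2$ one has $\log\frac{s}{u+2}<\log\frac{s-1}{u+1}$), and your own proposed failsafe exposes the issue: differentiating the printed expression gives a third term with factor $\frac{1}{s}$ where $(sf(s))'=F(s-1)$ requires $\frac{1}{s-1}$, so the printed formula does not satisfy the system as an identity (it is only a lower bound for $f$, hence harmless for the sieve lower bounds but not an equality). So either flag the statement as a misprint of the classical formula and prove the corrected version, or prove the printed version only as an inequality; as written, your claim that the Fubini reorganization reproduces the stated form is exactly the point that fails.
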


\begin{lemma}\label{l4} ([\cite{CAI867}, Lemma 4], deduced from \cite{JIA96}, \cite{ERPAN}). Let
$$
x>1, \quad z=x^{\frac{1}{u}}, \quad Q(z)=\prod_{p<z} p.
$$
Then for $u \geqslant 1$, we have
$$
\sum_{\substack{n \leqslant x \\(n, Q(z))=1}} 1=w(u) \frac{x}{\log z}+O\left(\frac{x}{\log ^{2} z}\right),
$$
where $w(u)$ is determined by the following differential-difference equation
\begin{align*}
\begin{cases}
w(u)=\frac{1}{u}, & \quad 1 \leqslant u \leqslant 2, \\
(u w(u))^{\prime}=w(u-1), & \quad u \geqslant 2 .
\end{cases}
\end{align*}
Moreover, we have
$$
\begin{cases}w(u) \leqslant \frac{1}{1.763}, & u \geqslant 2, \\ w(u)<0.5644, & u \geqslant 3, \\ w(u)<0.5617, & u \geqslant 4.\end{cases}
$$
\end{lemma}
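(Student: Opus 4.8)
The plan is to establish the two halves of the statement separately: first the asymptotic evaluation of $\Phi(x,z):=\sum_{n\leqslant x,\,(n,Q(z))=1}1$, and then the numerical bounds on the Buchstab function $w$.

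For the asymptotic I would argue by induction, proving the formula successively on $[1,2]$, $[2,3]$, $[3,4]$, and so on. In the base range $1\leqslant u\leqslant2$ we have $z=x^{1/u}\geqslant\sqrt x$, so an integer $n\leqslant x$ with $(n,Q(z))=1$ is either $1$ or a prime in $[z,x]$, since a product of two or more primes each $\geqslant z$ already exceeds $x$; hence $\Phi(x,z)=1+\pi(x)-\pi(z-1)$, and the prime number theorem with its classical exponential error term gives
\[
\Phi(x,z)=\frac{x}{\log x}+O\!\left(\frac{x}{\log^2 x}\right)=\frac1u\cdot\frac{x}{\log z}+O\!\left(\frac{x}{\log^2 z}\right)=w(u)\frac{x}{\log z}+O\!\left(\frac{x}{\log^2 z}\right),
\]
using $z=o(x/\log x)$, which holds for $u$ bounded away from $1$ (in the applications of this paper $u$ is a fixed constant, so $\log z\asymp\log x$ throughout). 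For the inductive step, assume the formula on $[1,k]$ with $k\geqslant2$ and take $u\in(k,k+1]$; the Buchstab identity gives
\[
\Phi(x,z)=\Phi\!\left(x,x^{1/k}\right)+\sum_{z\leqslant p<x^{1/k}}\Phi\!\left(\frac xp,p\right).
\]
Writing $p=x^{1/v}$ with $k<v\leqslant u$, one has $\Phi(x/p,p)=\Phi(y,y^{1/(v-1)})$ with $y=x/p$ and parameter $v-1\in[k-1,k]$, so the induction hypothesis applies to each term; substituting it, converting the sum over $p$ into an integral by partial summation and the prime number theorem, and collecting, the main term becomes $\frac{x}{\log x}\big(kw(k)+\int_k^u w(s-1)\,ds\big)$, which equals $uw(u)\frac{x}{\log x}=w(u)\frac{x}{\log z}$ because $uw(u)=kw(k)+\int_k^u w(s-1)\,ds$ — this is exactly the differential--difference equation $(uw(u))'=w(u-1)$ integrated from $k$, and it is precisely what forces the limit function in the formula to be $w$. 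Every error contribution — from $\Phi(x,x^{1/k})$, from the inductive errors $O((x/p)/\log^2 p)$ summed over $p$, and from the prime number theorem in the partial summation — stays $O(x/\log^2 z)$. This is Buchstab's classical argument; the cited references supply the version with this precise error term.

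For the numerical bounds I would use explicit formulas for $w$ obtained by integrating the differential--difference equation over successive unit intervals, exactly as $F$ and $f$ are obtained in Lemma~\ref{l2}. On $[2,3]$ one gets $uw(u)=1+\int_2^u\frac{dt}{t-1}=1+\log(u-1)$, i.e.\ $w(u)=\frac{1+\log(u-1)}{u}$; its derivative vanishes where $\frac{u}{u-1}=1+\log(u-1)$, equivalently where $v:=u-1$ solves $v\log v=1$, giving $v=v_0=1.7632\ldots$ and, at $u_0=v_0+1$, $w(u_0)=\frac{1+\log v_0}{v_0+1}=\frac1{v_0}<\frac1{1.763}$. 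Since $w$ increases on $[2,u_0]$ and decreases on $[u_0,3]$, this is the maximum of $w$ on $[2,3]$, with endpoint values $w(2)=\tfrac12$ and $w(3)=\tfrac{1+\log2}{3}=0.56438\ldots<0.5644$. To extend the bounds to $u\geqslant3$ and $u\geqslant4$ I would (i) compute $w$ on $[3,4]$ and $[4,5]$ from the same recursion, showing it stays below $w(3)$ on $[3,4]$ and below $0.5617$ on $[4,5]$, and (ii) invoke the standard damped-oscillation estimate $w(u)=e^{-\gamma}+O(\varrho(u))$ with amplitude decreasing in $u$, so that the local maxima of $w$ on successive unit intervals form a decreasing sequence converging to $e^{-\gamma}=0.56146\ldots$. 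Hence $\max_{u\geqslant2}w(u)=1/v_0<1/1.763$, $\max_{u\geqslant3}w(u)=w(3)<0.5644$, and $\max_{u\geqslant4}w(u)<0.5617$.

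The main obstacle is the error bookkeeping in the Buchstab iteration: one must check that integrating the inductive error $O((x/p)/\log^2 p)$ over $z\leqslant p<x^{1/k}$, together with the error from replacing the sum over primes by an integral, does not degrade the bound $O(x/\log^2 z)$ — this needs the strong form of the prime number theorem error and a little care near the endpoints $p\approx z$ and $p\approx x^{1/k}$ (and, if one wants the formula uniformly in $u$, a quantitative estimate of how the relative error propagates through the iteration). The second delicate point is the very tight bound $0.5617$ for $u\geqslant4$, which lies only about $2\times10^{-4}$ above $e^{-\gamma}$, so one needs the oscillation of $w$ beyond $u=4$ controlled to that accuracy; this is where the sharp form of the Buchstab-function asymptotics from the cited literature is essential rather than any crude estimate.
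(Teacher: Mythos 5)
There is nothing to compare against inside the paper: the author does not prove this lemma at all, but quotes it verbatim as Lemma~4 of \cite{CAI867}, itself deduced from \cite{JIA96} and \cite{ERPAN}. Your reconstruction is the standard route to such a statement and is essentially sound: the base range $1\leqslant u\leqslant 2$ by the prime number theorem (your remark that one must keep $u$ bounded away from $1$ is apt --- at $u=1$ the stated formula is literally false, and in the paper's applications $u$ is a fixed constant, so $\log z\asymp\log x$), the inductive step by Buchstab's identity $\Phi(x,z)=\Phi(x,x^{1/k})+\sum_{z\leqslant p<x^{1/k}}\Phi(x/p,p)$ with partial summation, the main terms recombining precisely because $uw(u)=kw(k)+\int_k^u w(s-1)\,ds$, and the error terms summing to $O(x/\log^2 z)$ since $\sum_{z\leqslant p<x^{1/k}}p^{-1}=O(1)$ for bounded $u$. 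The explicit formula $w(u)=(1+\log(u-1))/u$ on $[2,3]$, the interior maximum $1/v_0$ with $v_0\log v_0=1$, and the value $w(3)=(1+\log 2)/3<0.5644$ are all correct; note also that the monotonicity of successive maxima that you assert can be made rigorous cheaply, since at any interior critical point $u^*$ one has $w(u^*)=w(u^*-1)$, whence $\sup_{[n,n+1]}w\leqslant\max\bigl(w(n),\sup_{[n-1,n]}w\bigr)$.

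The one concrete soft spot is the tail bound $w(u)<0.5617$ for $u\geqslant 4$. As you yourself flag, this constant sits only about $2\times 10^{-4}$ above $e^{-\gamma}\approx 0.561459$, and the ``damped-oscillation'' estimate you invoke is not strong enough as stated: a bound of the shape $\left|w(u)-e^{-\gamma}\right|\leqslant\rho(u-1)$ (Dickman's $\rho$) gives only $e^{-\gamma}+\rho(4)\approx 0.5664$ on $[5,6]$ and $e^{-\gamma}+\rho(5)\approx 0.5618$ on $[6,7]$, both above $0.5617$; it only wins from $u\geqslant 7$ on, where $\rho(6)\approx 2\times 10^{-5}$. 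So your step (i) must be extended: one has to integrate the differential--difference equation explicitly (numerically) on $[4,5]$, $[5,6]$ and $[6,7]$, checking the maximum on each, before handing the remaining range to the crude decay estimate --- or else import a sharper explicit oscillation inequality from the literature. Since the paper itself delegates the entire lemma, including these numerics, to the cited sources, this is a gap of quantitative bookkeeping rather than of method, but as written your plan would not yet deliver the constant $0.5617$ on $4\leqslant u\leqslant 7$.
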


\begin{conjecture}\label{mean}
(\text{GEH}$(\theta)$, deduced from [\cite{ljm}, Conjecture 1.2]).
Let $\theta \in (0,1)$ be a constant and put
$$
\begin{aligned}
P\left(y_{1}, \ldots, y_{k} ; z_{1}, \ldots, z_{k}\right) & =\left\{m=p_{1} \cdots p_{k} : y_{1} \leqslant p_{1} \leqslant z_{1}, \ldots, y_{k} \leqslant p_{k} \leqslant z_{k}\right\}, \\
\pi_{k}(x ; q, a) & =\sum_{\substack{m \in P\left(y_{1}, \ldots, y_{k} ; z_{1}, \ldots, z_{k}\right) \\
m \leqslant x, m \equiv a(\bmod q)}} 1, \\
\pi_{k}(x ; q) & =\sum_{\substack{m \in P\left(y_{1}, \ldots, y_{k} ; z_{1}, \ldots, z_{k}\right) \\
m \leqslant x,(m, q)=1}} 1 .
\end{aligned}
$$
Then for any $A>0$ and $l \geqslant 1$ there exists $B=B(A, l)>0$ such that
$$
\sum_{q \leqslant x^{\theta} \log ^{-B} x} \tau^{l}(q) \max _{(a, q)=1}\left|\pi_{k}(x ; q, a)-\frac{\pi_{k}(x ; q)}{\varphi(q)}\right| \ll \frac{x}{\log ^{A} x},
$$
where the implied constant depends only on $k, l$ and $A$. 
\end{conjecture}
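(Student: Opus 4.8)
Since this statement is a conjecture rather than a lemma, a ``proof proposal'' can only be a program sketch whose crux is genuinely open; I nonetheless record the standard line of attack. The statement $\mathrm{GEH}(\theta)$ is a $k$-fold convolution analogue of the Elliott--Halberstam conjecture, and the natural first step is structural: after a dyadic decomposition of each interval $[y_i,z_i]$ and the insertion of $\Lambda$ (or $\mu$) through Vaughan's identity to encode the primality constraints, the characteristic function of $\{m\le x:\ m=p_1\cdots p_k,\ y_i\le p_i\le z_i\}$ becomes a finite sum of Dirichlet convolutions $f_1\ast\cdots\ast f_K$ of boundedly many compactly supported arithmetic functions of bounded size. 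Hence $\pi_k(x;q,a)$ is a bounded combination of $\sum_{n\le x,\ n\equiv a\,(q)}(f_1\ast\cdots\ast f_K)(n)$, and the task reduces to bounding the $\tau^l$-weighted discrepancy sum $\sum_{q\le x^\theta\log^{-B}x}\tau^l(q)\max_{(a,q)=1}|\Delta(x,f;q,a)|$ for such convolutions.

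In the unconditional regime $\theta<1/2$ I would invoke the Bombieri--Vinogradov theorem in convolution form: splitting $f_1\ast\cdots\ast f_K$ into Type I and Type II pieces via Heath-Brown's (or Motohashi's) identity and applying the large sieve with Cauchy--Schwarz to the resulting short Dirichlet polynomials yields $\sum_{q\le x^{1/2}\log^{-B}x}\max_{(a,q)=1}|\Delta(x,f;q,a)|\ll x\log^{-A}x$, since $K$ is fixed. The weight $\tau^l(q)$ is then absorbed by one further Cauchy--Schwarz together with the standard moment estimate $\sum_{q\le Q}\tau^{2l}(q)/q\ll(\log Q)^{O_l(1)}$.

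The substantive range is $1/2\le\theta<1$, where the large sieve alone does not suffice. Here the plan is Linnik's dispersion method in the Bombieri--Friedlander--Iwaniec form, exploiting two structural features simultaneously: the modulus $q$ can be restricted to, or factored through, moduli admitting a convenient multiplicative factorization (as in Zhang's work and the Polymath8 project), and the integers $m=p_1\cdots p_k$ already carry a genuine $k$-linear structure with $k\ge 2$. After opening $\max_a$ and squaring out the discrepancy, the off-diagonal main terms collapse to incomplete Kloosterman sums modulo $q$, to be estimated by the Weil bound reinforced by Deshouillers--Iwaniec-type bounds for sums of Kloosterman sums; the extra bilinear (indeed, for $k\ge 3$, trilinear) room is precisely what would allow the exponent to exceed $1/2$.

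The \textbf{main obstacle} is this last range: establishing $\mathrm{GEH}(\theta)$ for all admissible $q$ up to $x^\theta$ with $\theta$ close to $1$ is equivalent to a $k$-fold strengthening of the Elliott--Halberstam conjecture and is open; the dispersion method currently delivers only $\theta=1/2+\delta$ for small $\delta$ and for moduli of restricted shape, and even that rests on the deepest available inputs from $\ell$-adic cohomology. Thus no unconditional proof of the full statement---in particular of $\mathrm{GEH}(0.99)$, which the conditional theorems of this paper assume---is currently available; the conjecture is supported only by the heuristic that discrepancies of multiplicative sequences in progressions to modulus $q$ should be controllable on average for $q$ as large as $x^{1-o(1)}$.
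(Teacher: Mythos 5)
Your assessment is correct and matches the paper's treatment: the statement labelled GEH$(\theta)$ is an unproved hypothesis imported from [\cite{ljm}, Conjecture 1.2], the paper offers no proof of it, and only invokes the known range $\theta \leqslant \tfrac12$ (Lemma~\ref{GEHtrue}, via Bombieri--Vinogradov-type estimates for convolutions, exactly the unconditional half of your sketch), while GEH$(0.99)$ is used purely as an assumption in Theorems~\ref{t4} and~\ref{t5}. Your identification of the genuinely open obstacle---extending the level of distribution past $x^{1/2}$ for all moduli, beyond what dispersion/Kloosterman-sum technology gives---is accurate, so there is nothing to reconcile with a paper proof that does not exist.
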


\begin{lemma}\label{GEHtrue}
([\cite{Cai2017}, Lemma 3], deduced from \cite{HBL2016}, \cite{Opera}).
\text{GEH}$(\theta)$ is true for $\theta \leqslant \frac{1}{2}$. 

\end{lemma}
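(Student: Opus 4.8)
\emph{The statement is a Bombieri--Vinogradov type theorem.} It asserts that the indicator function of the set of integers which are products of exactly $k$ primes, the $j$-th confined to a prescribed interval $[y_j,z_j]$, is equidistributed over residue classes to moduli as large as $x^{1/2-o(1)}$, even after insertion of the divisor weight $\tau^l(q)$. Since the case $\theta<\tfrac12$ follows from the case $\theta=\tfrac12$ by positivity of the summand, it suffices to treat $\theta=\tfrac12$, and the plan is to deduce it from the classical Bombieri--Vinogradov theorem for primes exactly as is done in \cite{Opera} and in \cite{HBL2016}.

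First I would strip off the inessential features. The weight $\tau^l(q)$ costs only a fixed power of $\log x$ on average: cutting $q\le x^{1/2}\log^{-B}x$ into $O(\log x)$ dyadic blocks and applying a standard Cauchy--Schwarz manipulation, using $\sum_{q\le Q}\tau^{2l}(q)\ll Q\log^{O_l(1)}x$ together with the trivial pointwise bound $\pi_k(x;q,a)-\pi_k(x;q)/\varphi(q)\ll x\log x/\varphi(q)$, the claim reduces to the weightless estimate
$$
\sum_{q\le x^{1/2}\log^{-C}x}\ \max_{(a,q)=1}\Bigl|\pi_k(x;q,a)-\frac{\pi_k(x;q)}{\varphi(q)}\Bigr|\ \ll_{A,C}\ \frac{x}{\log^{A}x}
$$
for every $A>0$ and a suitable $C=C(A)$, the loss in the exponent being absorbed by enlarging $A$ and $C$. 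Degenerate ranges, and the range in which every $z_j<x^{\varepsilon_0}$ (so that $\pi_k(x;q,a)$ is supported on $m<x^{k\varepsilon_0}$ and vanishes unless $q$ is already tiny), are disposed of directly.

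The substantive step is the reduction to bilinear sums. Writing $\mathbf 1_P=b_1*\cdots*b_k$ with $b_j$ the indicator of primes in $[y_j,z_j]$, I would apply the Heath-Brown identity (or Vaughan's identity) to the prime-indicator factor carrying the longest interval, expressing $\Lambda(n)$ for $n\le x$ as a bounded combination of Dirichlet convolutions of M\"obius and constant functions with every M\"obius variable restricted to $[1,x^{1/K}]$ for a fixed large $K$. Inserting this and expanding, $\pi_k(x;q,a)$ becomes a sum of $O_K(1)$ multilinear sums, each either of Type I --- a convolution $\sum_{d\le D}\alpha_d\sum_{n\le x/d,\ dn\equiv a\,(q)}1$ with a genuinely free inner variable and $D\le x^{1/2-\varepsilon}$, in which the inner count equals $x/(dq)+O(1)$ and matches the main term, so that the total error is $\ll x^{1-\varepsilon}\log^{-C}x$ --- or of Type II --- a genuinely bilinear sum $\sum_{M<m\le 2M}\sum_{N<n\le 2N}\alpha_m\beta_n$ with $MN\asymp x$, divisor-bounded coefficients and $x^{\varepsilon}\le M\le x^{1/2}$, to which the large sieve inequality gives equidistribution to moduli up to $x^{1/2-\varepsilon}$, the very small moduli $q\le\log^{C'}x$ being handled by the Siegel--Walfisz theorem. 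Summing the $O_K(1)$ contributions yields the displayed bound and hence the lemma. The hard part is precisely the bookkeeping here: one must verify that running the combinatorial identity on products of prime-indicators over \emph{arbitrary} admissible intervals $[y_j,z_j]$ produces only Type I sums with $D\le x^{1/2-\varepsilon}$ and \emph{balanced} Type II sums, with no surviving bilinear piece having a variable that is simultaneously prime-restricted and ranging over a long interval --- such a piece would destroy the level $\tfrac12$. This is the technical core of the Bombieri--Vinogradov theorem and of its extension to $\Lambda_k$-type functions; the calculation is carried out in \cite{Opera}, and in the precise form required here in \cite{HBL2016}, so I would follow those treatments while tracking the dependence on $k$ and $l$ to confirm it is as stated.
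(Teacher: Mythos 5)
Your sketch is correct in outline and matches the paper's treatment: the paper gives no argument for this lemma at all, quoting it as Lemma 3 of \cite{Cai2017}, which is itself deduced from \cite{HBL2016} and \cite{Opera}. Your reconstruction of the underlying proof (Cauchy--Schwarz removal of the $\tau^l$ weight, a combinatorial identity on the longest prime factor, Type I/II sums with the large sieve, Siegel--Walfisz for small moduli/conductors) is precisely the standard Bombieri--Vinogradov machinery used in those cited sources, to which you, like the paper, defer the decisive bookkeeping.
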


\section{Weighted sieve method}

\begin{lemma}\label{l3} We have
\begin{align*}
4\pi_{1,3,14}(x) \geqslant &3S\left(\mathcal{A};\mathcal{P}, \left(\frac{x}{q}\right)^{\frac{1}{13}}\right)+S\left(\mathcal{A};\mathcal{P}, \left(\frac{x}{q}\right)^{\frac{1}{8.4}}\right)\\
&+\sum_{(\frac{x}{q})^{\frac{1}{13}} \leqslant p_1<p_2<(\frac{x}{q})^{\frac{1}{8.4}}  }S\left(\mathcal{A}_{p_1 p_2};\mathcal{P},\left(\frac{x}{q}\right)^{\frac{1}{13}}\right)\\
&+\sum_{(\frac{x}{q})^{\frac{1}{13}} \leqslant p_1<(\frac{x}{q})^{\frac{1}{8.4}} \leqslant p_2<(\frac{x}{q})^{0.475-\frac{2}{13}-\varepsilon}p^{-1}_1  }S\left(\mathcal{A}_{p_1 p_2};\mathcal{P},\left(\frac{x}{q}\right)^{\frac{1}{13}}\right)\\
&-\sum_{(\frac{x}{q})^{\frac{1}{13}} \leqslant p<(\frac{x}{q})^{\frac{1}{3.145}} }S\left(\mathcal{A}_{p};\mathcal{P},\left(\frac{x}{q}\right)^{\frac{1}{13}}\right)\\
&-\sum_{(\frac{x}{q})^{\frac{1}{13}} \leqslant p<(\frac{x}{q})^{\frac{1}{3.81}} }S\left(\mathcal{A}_{p};\mathcal{P},\left(\frac{x}{q}\right)^{\frac{1}{13}}\right)\\
&-\sum_{(\frac{x}{q})^{\frac{1}{13}} \leqslant p_1<(\frac{x}{q})^{\frac{1}{3.145}} \leqslant p_2 <(\frac{x}{qp_1})^{\frac{1}{2}} }S\left(\mathcal{A}_{p_1 p_2};\mathcal{P}(p_1),p_2\right)\\
&-\sum_{(\frac{x}{q})^{\frac{1}{8.4}} \leqslant p_1<(\frac{x}{q})^{\frac{1}{3.81}} \leqslant p_2 <(\frac{x}{qp_1})^{\frac{1}{2}} }S\left(\mathcal{A}_{p_1 p_2};\mathcal{P}(p_1),\left(\frac{x}{qp_1 p_2}\right)^{\frac{1}{2}}\right)\\
&-\sum_{(\frac{x}{q})^{\frac{1}{13}} \leqslant p_1 < p_2 < p_3< p_4<(\frac{x}{q})^{\frac{1}{8.4}}  }S\left(\mathcal{A}_{p_1 p_2 p_3 p_4};\mathcal{P}(p_1),p_2\right) \\
&-\sum_{(\frac{x}{q})^{\frac{1}{13}} \leqslant p_1 < p_2 < p_3<(\frac{x}{q})^{\frac{1}{8.4}} \leqslant p_4< (\frac{x}{q})^{0.475-\frac{2}{13}-\varepsilon}p^{-1}_3}S\left(\mathcal{A}_{p_1 p_2 p_3 p_4};\mathcal{P}(p_1),p_2\right) \\
&-2\sum_{(\frac{x}{q})^{\frac{1}{3.145}} \leqslant p_1 < p_2 <(\frac{x}{qp_1})^{\frac{1}{2}} }S\left(\mathcal{A}_{p_1 p_2};\mathcal{P}(p_1),p_2\right)\\
&-2\sum_{(\frac{x}{q})^{\frac{1}{3.81}} \leqslant p_1 < p_2 <(\frac{x}{qp_1})^{\frac{1}{2}} }S\left(\mathcal{A}_{p_1 p_2};\mathcal{P}(p_1),p_2\right)\\
&-\sum_{k=15}^{199} S\left(\mathcal{A}^{(k)} ; \mathcal{P}, \left(\frac{x}{q}\right)^{\frac{1}{13}}\right)-\sum_{k=15}^{199} S\left(\mathcal{A}^{(k)} ; \mathcal{P}, \left(\frac{x}{q}\right)^{\frac{1}{8.4}}\right) \\
& -\sum_{k=15}^{199} S\left(\mathcal{A}^{(k)} ; \mathcal{P}, \left(\frac{x}{q}\right)^{\frac{1}{3.81}}\right)-\sum_{k=15}^{199} S\left(\mathcal{A}^{(k)} ; \mathcal{P}, \left(\frac{x}{q}\right)^{\frac{1}{3.145}}\right)+O\left(x^{\frac{12}{13}}\right) \\
=&\left(3 S_{11}+S_{12}\right)+\left(S_{21}+S_{22}\right)-\left(S_{31}+S_{32}\right)-\left(S_{41}+S_{42}\right)\\
&-\left(S_{51}+S_{52}\right)-2\left(S_{61}+S_{62}\right)-\left(S_{71}+S_{72}+S_{73}+S_{74}\right)+O\left(x^{\frac{12}{13}}\right)\\
=&S_1+S_2-S_3-S_4-S_5-2S_6-S_7+O\left(x^{\frac{12}{13}}\right).
\end{align*}
\end{lemma}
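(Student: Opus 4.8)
The plan is to prove this by the combinatorial core of a Chen-type weighted sieve with one reserved small prime; no analytic estimate is needed at this stage --- Lemmas~\ref{l1}, \ref{l2} and \ref{l4} enter only afterwards, when the $S_i$ are evaluated. Fix the prime $q<x^{\varepsilon}$, and take the parameter $z$ in the definition of $\mathcal{A}$ equal to $x^{0.005}$, the same threshold as in the $\mathcal{M}_k$, so that the sets $\mathcal{A}^{(k)}$ partition $\mathcal{A}$ (each $p$ in the range of $\mathcal{A}$ has $p+6\in\mathcal{M}_k$ for a unique $k$). Every prime $p$ with $7<p\le x$, $p\equiv-2\pmod q$, $(p+2)/q$ a $P_2$ and $p+6$ a $P_{14}$ satisfies $p+2=P_3$ and $p+6=P_{14}$, so, writing $N_q$ for the number of such $p$, we have $N_q\le\pi_{1,3,14}(x)$. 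It therefore suffices to construct a weight $W(p)$ supported on the range defining $\mathcal{A}$ such that $\sum_p W(p)\le 4N_q+O(x^{12/13})$ and such that $\sum_p W(p)$ equals the displayed combination of the $S_{ij}$ up to $O(x^{12/13})$.

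Put $m=(p+2)/q$ and write $W=W_1-W_2$. Here $W_1$ is a refined Chen-type weight detecting $m=P_2$ (of two-level type, with starting sieve levels $(x/q)^{1/13}$ and $(x/q)^{1/8.4}$, in the style of Cai's delicate sieve process), arranged so that --- after the Buchstab manipulations described below --- its contribution to $\sum_p W(p)$ is $(3S_{11}+S_{12})+(S_{21}+S_{22})-(S_{31}+S_{32})-(S_{41}+S_{42})-(S_{51}+S_{52})-2(S_{61}+S_{62})$. A finite case check --- splitting according to where the prime factors of $m$ fall relative to the thresholds $(x/q)^{1/13}$, $(x/q)^{1/8.4}$, $(x/q)^{1/3.81}$, $(x/q)^{1/3.145}$, $(x/q)^{0.475-2/13}$, and using the size bound $m\le (x/q)(1+o(1))$ to forbid products of too many large primes --- shows that, outside a thin exceptional set of $p$ of cardinality $O(x^{12/13})$, one has $W_1(p)\le 4$ when $m=P_2$ and $W_1(p)\le 0$ when $m$ has at least three prime factors. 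The piece $W_2$ is supported on those $p$ for which $p+6$ has at least $15$ prime factors; all of these then exceed $x^{0.005}$ and $p+6\le x+6$, so $p+6\in\mathcal{M}_k$ for a unique $k\in\{15,\dots,199\}$, and for each such $k$ one subtracts $S(\mathcal{A}^{(k)};\mathcal{P},(x/q)^{1/13})$, $S(\mathcal{A}^{(k)};\mathcal{P},(x/q)^{1/8.4})$, $S(\mathcal{A}^{(k)};\mathcal{P},(x/q)^{1/3.81})$ and $S(\mathcal{A}^{(k)};\mathcal{P},(x/q)^{1/3.145})$, which contributes $-(S_{71}+S_{72}+S_{73}+S_{74})$. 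These four levels are chosen exactly so that $W_2(p)\ge W_1(p)$ on this exceptional set: for instance, if the least prime factor of $m$ lies in $[(x/q)^{1/3.81},(x/q)^{1/3.145})$ then $W_2(p)\ge 3$, while the subtraction in $S_{31}$ brings $W_1(p)$ down to at most $3$, and the other positions of the least prime factor balance in the same way. Since $W_2\ge 0$ everywhere, it follows that $W(p)\le 4\cdot\mathbf{1}[m=P_2]\cdot\mathbf{1}[p+6=P_{14}]$ outside the thin exceptional set, and summing gives $\sum_p W(p)\le 4N_q+O(x^{12/13})\le 4\pi_{1,3,14}(x)+O(x^{12/13})$.

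It remains to identify $\sum_p W(p)$ with the right-hand side. One iterates Buchstab's identity $S(\mathcal{A}_d;\mathcal{P},z_2)=S(\mathcal{A}_d;\mathcal{P},z_1)-\sum_{z_1\le p<z_2}S(\mathcal{A}_{dp};\mathcal{P},p)$: the first application produces the $S_3$-type sums, and a second application on the terms whose extracted prime runs up to $(x/(qp_1))^{1/2}$ produces $S_4,S_5,S_6$, at which point one invokes Chen's switching principle --- when $m=p_1p_2m'$ with $p_1$ large, $m'$ must be $1$ or prime --- which is why those inner sifting functions run over $\mathcal{P}(p_1)$ and only up to $p_2$ or $(x/(qp_1p_2))^{1/2}$. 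The cut $p_2<(x/q)^{0.475-2/13-\varepsilon}p_1^{-1}$ in $S_{22}$ and $S_{52}$ marks the threshold past which the relevant remainder for $\mathcal{A}$ exceeds the usable level of distribution --- about $x^{0.475}$, just below $x^{1/2}$, supplied by Bombieri--Vinogradov-type estimates that accommodate the side condition $(p+6,P(z))=1$. The $p$ beyond that cut, together with the contributions of $p\le 13$, of $m<(x/q)^{1/13}$, of prime powers, of the finitely many excluded residue classes, and of the thin exceptional set above, are all $O(x^{12/13})$.

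The delicate point is the design in the second step: one must decide which Buchstab terms to keep and with which signs, and fix the exponents $1/13$, $1/8.4$, $1/3.81$, $1/3.145$, $0.475$, so that the pointwise inequality $W(p)\le 0$ on bad $p$ survives once several Buchstab levels have been nested, while at the same time every surviving $S_{ij}$ retains a shape that the later sections can estimate using Lemmas~\ref{l1}, \ref{l2} and \ref{l4} together with the switched sets $\mathcal{B}$, $\mathscr{W}^{(1)}$, $\mathscr{W}^{(2)}$. It is in this finite verification that the margin is tightest --- as noted in the introduction, tightening $b\ge 14$ to $b\ge 13$ would require roughly a $45\%$ gain there --- while everything else is routine sieve bookkeeping.
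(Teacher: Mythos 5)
Your overall strategy --- read the right-hand side as $\sum_p W(p)$ for a Chen--Cai type weight $W=W_1-W_2$, show $W(p)\leqslant 4$ when $(p+2)/q=P_2$ and $p+6=P_{14}$, $W(p)\leqslant 0$ otherwise up to an $O(x^{12/13})$ exceptional set, and use the four $\mathcal{A}^{(k)}$-sums to dominate the positive weight when $p+6$ has at least $15$ prime factors --- is indeed the approach the paper intends (its own proof is deferred to Cai's Lemma 6). But your write-up does not prove the lemma: the two load-bearing assertions, namely that ``a finite case check shows $W_1(p)\leqslant 4$ when $m=P_2$ and $W_1(p)\leqslant 0$ when $m$ has at least three prime factors'' and that $W_2(p)\geqslant W_1(p)$ on the set where $p+6$ has at least $15$ prime factors, are precisely the content of the lemma, and you assert them rather than verify them. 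The single case you sketch (least prime factor of $m$ in $[(x/q)^{1/3.81},(x/q)^{1/3.145})$) concerns the easier comparison $W_2\geqslant W_1$, not the hard claim $W_1\leqslant 0$ for $m$ with three or more prime factors; and the claim that the decomposition ``follows by iterating Buchstab'' is not accurate as described, since the positive two-prime sums $S_{21},S_{22}$ and the four-prime sums $S_{51},S_{52}$ do not arise from the two applications you indicate.

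That the case check is not a formality is shown by $m=r_1r_2r_3$ with $r_1\in[(x/q)^{1/8.4},(x/q)^{1/3.81})$, $r_2\in[(x/q)^{1/3.81},(x/q)^{1/3.145})$, $r_1r_2^3<x/q$ and $m$ of size close to $x/q$ (for instance exponents $0.12,\,0.27,\,0.61$ of $x/q$). Such an $m$ receives $+3$ from $3S_{11}$, $+1$ from $S_{12}$, $-2$ from $S_{31}$, $-1$ from $S_{32}$, and nothing from $S_2$, $S_5$; moreover, with the sifting function in $S_{42}$ read literally as $S(\mathcal{A}_{p_1p_2};\mathcal{P}(p_1),(x/(qp_1p_2))^{1/2})$, the prime $p_2$ itself must exceed the sifting level, which forces $p_1p_2^3\geqslant x/q$, so none of $S_{41},S_{42},S_{61},S_{62}$ counts this $m$ either, and the net weight is $+1$ although $p+2=qr_1r_2r_3$ is a $P_4$. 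Such $p$ cannot be absorbed into $O(x^{12/13})$ (conjecturally there are of order $x/(\log x)^{2}$ of them, and no unconditional bound makes them negligible). The inequality closes only when the switched term $S_{42}$ is understood to exempt $p_2$ as well, i.e.\ to count $m=p_1p_2\times\mathrm{prime}$ with the cofactor free of factors below $(x/(qp_1p_2))^{1/2}$ --- which is what the later evaluation via $\mathcal{E}$, $\mathcal{B}$, $\mathscr{W}^{(1)}$ actually uses. A correct proof must notice and resolve exactly this kind of point while running through all configurations of the prime factors of $m$ against the thresholds $1/13$, $1/8.4$, $1/3.81$, $1/3.145$, $0.475-2/13$; since your proposal never engages with that verification, it describes the right framework but leaves the essential step of the lemma unproven.
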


\begin{proof} It is similar to that of [\cite{Cai2017}, Lemma 6] so we omit it here. 
\end{proof}

\section{Proof of Theorem 1.1}
In this section, sets $\mathcal{A}$, $\mathcal{E}$, $\mathcal{B}$, $\mathcal{M}_k$, $\mathcal{A}^{(k)}$, $\mathscr{W}$, $\mathscr{W}^{(1)}$ and $\mathscr{W}^{(2)}$ are defined respectively.

\subsection{Evaluation of $S_{1}, S_{2}, S_{3}$}
We are going to use Lemma~\ref{l1} to the set $\mathscr{W}$ and obtain upper and lower bounds of $S_{1}, S_{2}$ and $S_{3}$. For a prime $p>7$, we note that $d_1 \mid \left(\frac{p+2}{q}\right)$ and $d_2 \mid (p+6)$ imply that $(d_1,d_2)=(d_1,2)=(d_2,6)=1$. Therefore we can take
$$
\begin{aligned}
& \left|\mathscr{W}_{\mathbf{d}}\right|=h(\mathbf{d}) X+R(\mathbf{d}), \quad X=\frac{\pi(x)}{\varphi(q)}, \\
& h(\mathbf{d})= \begin{cases}\frac{1}{\varphi\left(d_{1} d_{2}\right)}, & \left(d_{1}, d_{2}\right)=\left(d_{1}, 2\right)=\left(d_{2}, 6\right)=1, \\
0, & \text {otherwise. }\end{cases}
\end{aligned}
$$

It is easy to show that
$$
h_{1}(p)=\begin{cases}
0, & p=2, \\
\frac{1}{p-1}, & p \geqslant 3 ;
\end{cases} \quad h_{2}(p)=\begin{cases}
0, & p=2,3, \\
\frac{1}{p-1}, & p \geqslant 5 ;
\end{cases} \quad h^{*}(p)= \begin{cases}0, & p=2, \\
\frac{1}{2}, & p=3, \\
\frac{2}{p-1}, & p \geqslant 5.\end{cases}
$$
and
\begin{align}
\nonumber V\left(z_{0}, h^{*}\right) V_{1} V_{2} & =\frac{1}{2} \prod_{3<p \leqslant z_{0}}\left(1-\frac{2}{p-1}\right) \prod_{z_{0}<p \leqslant (\frac{x}{q})^{\frac{1}{13}}}\left(1-\frac{1}{p-1}\right) \prod_{z_{0}<p \leqslant x^{0.005}}\left(1-\frac{1}{p-1}\right) \\
& =\left(1+O\left(z_{0}^{-1}\right)\right) C_3 V\left(\left(\frac{x}{q}\right)^{\frac{1}{13}}\right) V(x^{0.005})
\end{align}
with
$$
V(z)=\prod_{p<z}\left(1-\frac{1}{p}\right)=\frac{e^{-\gamma}}{\log z}\left(1+O\left(\frac{1}{\log z}\right)\right).
$$

To deal with the error term, by the Chinese remainder theorem, we have
$$
|R(\mathbf{d})| \leqslant\left|r(d_{1} d_{2})\right|,
$$
where
$$
|r(d)|=\max _{(a, d)=1}\left|\sum_{\substack{p \leqslant x \\ p \equiv a(\bmod d)}} 1-\frac{\pi(x)}{\varphi(d)}\right|+O(1) .
$$
By Bombieri's theorem we have
\begin{equation}
\sum_{d_{1} d_{2} \leqslant x^{\frac{1}{2}-\varepsilon}} \tau^{4}\left(d_{1} d_{2}\right)|R(\mathbf{d})| \ll x(\log x)^{-5} .
\end{equation}

Then by Lemma~\ref{l1} we have
\begin{align}
\nonumber S_{11}= & S\left(\mathscr{W},\left\{\left(\frac{x}{q}\right)^{\frac{1}{13}}, x^{0.005}\right\}\right) \\
\nonumber \geqslant & (1+o(1)) \frac{\pi(x)}{\varphi(q)} C_3 V\left(\left(\frac{x}{q}\right)^{\frac{1}{13}}\right) V\left(x^{0.005}\right)\{f(6.175) F(5)+F(6.175) f(5)-F(6.175) F(5)\} \\
\nonumber & +O\left(x(\log x)^{-5}\right) \\
\nonumber = & (1+o(1)) \frac{4 C_3 \pi(x)}{\varphi(q)\left(\log x^{0.475-\varepsilon}\right)\left(\log x^{0.025}\right)} \left\{f_{0}(6.175) F_{0}(5)+F_{0}(6.175) f_{0}(5)-F_{0}(6.175) F_{0}(5)\right\} \\
\nonumber & +O\left(x(\log x)^{-5}\right) \\
\geqslant & 818.10189 \frac{C_3 x \log \log x}{ (\log x)^3},
\end{align}
where
$$
f_{0}(s)=\frac{s}{2 e^{\gamma}} f(s), \quad F_{0}(s)=\frac{s}{2 e^{\gamma}} F(s) .
$$

Similarly, we have
\begin{align}
\nonumber S_{12}= & S\left(\mathscr{W},\left\{\left(\frac{x}{q}\right)^{\frac{1}{8.4}}, x^{0.005}\right\}\right) \\
\nonumber \geqslant & (1+o(1)) \frac{\pi(x)}{\varphi(q)} C_3 V\left(\left(\frac{x}{q}\right)^{\frac{1}{8.4}}\right) V\left(x^{0.005}\right)\{f(3.99) F(5)+F(3.99) f(5)-F(3.99) F(5)\} \\
\nonumber & +O\left(x(\log x)^{-5}\right) \\
\nonumber = & (1+o(1)) \frac{4 C_3 \pi(x)}{\varphi(q)\left(\log x^{0.475-\varepsilon}\right)\left(\log x^{0.025}\right)} \left\{f_{0}(3.99) F_{0}(5)+F_{0}(3.99) f_{0}(5)-F_{0}(3.99) F_{0}(5)\right\} \\
\nonumber & +O\left(x(\log x)^{-5}\right) \\
\geqslant & 516.86063 \frac{C_3 x \log \log x}{ (\log x)^3},\\
\nonumber S_{21} \geqslant & (1+o(1)) \frac{0.475 \times 4 C_3 \pi(x)}{\varphi(q)\left(\log x^{0.475-\varepsilon}\right)\left(\log x^{0.025}\right)} \left\{f_{0}(5) G+F_{0}(5) g-F_{0}(5) G\right\}+O\left(x(\log x)^{-5}\right) \\
\geqslant & 73.9301 \frac{C_3 x \log \log x}{ (\log x)^3},\\
\nonumber S_{22} \geqslant & (1+o(1)) \frac{0.475 \times 4 C_3 \pi(x)}{\varphi(q)\left(\log x^{0.475-\varepsilon}\right)\left(\log x^{0.025}\right)} \left\{f_{0}(5) H+F_{0}(5) h-F_{0}(5) H\right\}+O\left(x(\log x)^{-5}\right) \\
\geqslant & 149.13684 \frac{C_3 x \log \log x}{ (\log x)^3},\\
\nonumber S_{31} \leqslant & (1+o(1)) \frac{0.475 \times 4 C_3 \pi(x)}{\varphi(q)\left(\log x^{0.475-\varepsilon}\right)\left(\log x^{0.025}\right)} \left\{F_{0}(5) J\right\}+O\left(x(\log x)^{-5}\right) \\
\leqslant & 1282.38485 \frac{C_3 x \log \log x}{ (\log x)^3},\\
\nonumber S_{32} \leqslant & (1+o(1)) \frac{0.475 \times 4 C_3 \pi(x)}{\varphi(q)\left(\log x^{0.475-\varepsilon}\right)\left(\log x^{0.025}\right)} \left\{F_{0}(5) K\right\}+O\left(x(\log x)^{-5}\right) \\
\leqslant & 1048.20211 \frac{C_3 x \log \log x}{ (\log x)^3},
\end{align}
where
\begin{align}
\nonumber G= & \int_{1 / 13}^{1 / 8.4} \frac{d t_{1}}{t_{1}} \int_{t_{1}}^{1 / 8.4} \frac{d t_{2}}{t_{2}\left(0.475-t_{1}-t_{2}\right)} \\
\nonumber & +\int_{1 / 13}^{1 / 8.4} \frac{d t_{1}}{t_{1}} \int_{t_{1}}^{1 / 8.4} \frac{d t_{2}}{t_{2}\left(0.475-t_{1}-t_{2}\right)} \int_{2}^{5.175-13\left(t_{1}+t_{2}\right)} \frac{\log \left(t_{3}-1\right)}{t_{3}} d t_{3}, \\
\nonumber g= & \int_{1 / 13}^{1 / 8.4} \frac{d t_{1}}{t_{1}} \int_{t_{1}}^{1 / 8.4} \frac{\log \left(5.175-13\left(t_{1}+t_{2}\right)\right)}{t_{2}\left(0.475-t_{1}-t_{2}\right)} d t_{2} \\
\nonumber & +\int_{1 / 13}^{2.175 / 26} \frac{d t_{1}}{t_{1}} \int_{t_{1}}^{2.175 / 13-t_{1}} \frac{d t_{2}}{t_{2}\left(0.475-t_{1}-t_{2}\right)} \int_{3}^{5.175-13\left(t_{1}+t_{2}\right)} \frac{d t_{3}}{t_{3}} \int_{2}^{t_{3}-1} \frac{\log \left(t_{4}-1\right)}{t_{4}} d t_{4} , \\
\nonumber H= & \int_{1 / 13}^{1 / 8.4} \frac{d t_{1}}{t_{1}} \int_{1 / 8.4}^{0.475-2 / 13-t_{1}} \frac{d t_{2}}{t_{2}\left(0.475-t_{1}-t_{2}\right)} \\
\nonumber & +\int_{1 / 13}^{1 / 8.4} \frac{d t_{1}}{t_{1}} \int_{1 / 8.4}^{0.475-2 / 13-t_{1}} \frac{d t_{2}}{t_{2}\left(0.475-t_{1}-t_{2}\right)} \int_{2}^{5.175-13\left(t_{1}+t_{2}\right)} \frac{\log \left(t_{3}-1\right)}{t_{3}} d t_{3}, \\
\nonumber h= & \int_{1 / 13}^{1 / 8.4} \frac{d t_{1}}{t_{1}} \int_{1 / 8.4}^{0.475-2 / 13-t_{1}} \frac{\log \left(5.175-13\left(t_{1}+t_{2}\right)\right)}{t_{2}\left(0.475-t_{1}-t_{2}\right)} d t_{2} ,\\
\nonumber J= & \int_{1 / 13}^{1 / 3.145} \frac{d t}{t(0.475-t)}+\int_{1 / 13}^{0.475-3 / 13} \frac{d t_{1}}{t_{1}\left(0.475-t_{1}\right)} \int_{2}^{5.175-13 t_{1}} \frac{\log \left(t_{2}-1\right)}{t_{2}} d t_{2} \\
\nonumber & +\int_{1 / 13}^{0.475-5 / 13} \frac{d t_{1}}{t_{1}\left(0.475-t_{1}\right)} \int_{2}^{3.175-13 t} \frac{\log \left(t_{2}-1\right)}{t_{2}} d t_{2} \int_{t_{2}+2}^{5.175} \frac{1}{t_{3}} \log \frac{t_{3}-1}{t_{2}+1} d t_{3} , \\
\nonumber K= & \int_{1 / 13}^{1 / 3.81} \frac{d t}{t(0.475-t)}+\int_{1 / 13}^{0.475-3 / 13} \frac{d t_{1}}{t_{1}\left(0.475-t_{1}\right)} \int_{2}^{5.175-13 t_{1}} \frac{\log \left(t_{2}-1\right)}{t_{2}} d t_{2} \\
\nonumber & +\int_{1 / 13}^{0.475-5 / 13} \frac{d t_{1}}{t_{1}\left(0.475-t_{1}\right)} \int_{2}^{3.175-13 t} \frac{\log \left(t_{2}-1\right)}{t_{2}} d t_{2} \int_{t_{2}+2}^{5.175} \frac{1}{t_{3}} \log \frac{t_{3}-1}{t_{2}+1} d t_{3} .
\end{align}

\subsection{Evaluation of $S_{4}, S_{5}, S_{6}$}
By Chen's role-reversal trick we know that
\begin{align}
\nonumber S_{51}=&\sum_{\substack{p \in \mathcal{B} \\ \left(p+6, P\left(x^{0.005}\right)\right)=1}} 1 \\
\nonumber \leqslant& \sum_{\substack{m \in \mathcal{B} \\\left(m, P\left(x^{\frac{0.475-\varepsilon}{2}}\right)\right)=1 \\ \left(m+6, P\left(x^{0.005}\right)\right)=1}} 1+O\left(x^{\frac{1}{2}}\right) \\
=& S\left(\mathscr{W}^{(1)},\left\{x^{\frac{0.475-\varepsilon}{2}}, x^{0.005}\right\}\right)+O\left(x^{\frac{1}{2}}\right).
\end{align}
we may write
$$
\left|\mathscr{W}_{\mathbf{d}}^{(1)}\right|=h(\mathbf{d})|\mathcal{E}|+R^{(1)}(\mathbf{d}),
$$
where
$$
h(\mathbf{d})= \begin{cases}\frac{1}{\varphi\left(d_{1} d_{2}\right)}, & \left(d_{1}, d_{2}\right)=\left(d_{1}, 2\right)=\left(d_{2}, 6\right)=1, \\
0, & \text {otherwise }\end{cases}
$$
and
$$
\begin{aligned}
\left|R^{(1)}(\mathbf{d})\right| \leqslant & \max _{\left(a, d_{1} d_{2}\right)=1}\left|\sum_{\substack{n \in \mathcal{E} \\
n \equiv a\left(\bmod d_{1} d_{2}\right)}} 1-\frac{1}{\varphi\left(d_{1} d_{2}\right)} \sum_{\substack{n \in \mathcal{E} \\
\left(n, d_{1} d_{2}\right)=1}} 1\right|+\frac{1}{\varphi\left(d_{1} d_{2}\right)} \sum_{\substack{n \in \mathcal{E} \\
\left(n, d_{1} d_{2}\right)>1}} 1 \\
=& R_{1}^{(1)}(\mathbf{d})+R_{2}^{(1)}(\mathbf{d}) .
\end{aligned}
$$

To deal with the error term, by the arguments used in \cite{ljm}, we have
\begin{equation}
\sum_{d_{1} d_{2} \leqslant x^{\frac{1}{2}-\varepsilon}} \tau^{4}\left(d_{1} d_{2}\right)R^{(1)}(\mathbf{d}) \ll x(\log x)^{-5} .
\end{equation}

By Lemma~\ref{l4} we have
\begin{align}
\nonumber |\mathcal{E}|&=\sum_{(\frac{x}{q})^{\frac{1}{13}} \leqslant p_1 < p_2 < p_3< p_4<(\frac{x}{q})^{\frac{1}{8.4}}  }\sum_{\substack{1\leqslant m\leqslant \frac{x}{qp_1 p_2 p_3 p_4}\\\left(m, qp_{1}^{-1}  P\left(p_{2}\right)\right)=1 }}1\\
\nonumber &\leqslant (1+o(1))\frac{0.5617x}{q\log x}\int_{\frac{1}{13}}^{\frac{1}{8.4}} \frac{d t_{1}}{t_{1}} \int_{t_{1}}^{\frac{1}{8.4}} \frac{1}{t_{2}}\left(\frac{1}{t_{1}}-\frac{1}{t_{2}}\right) \log \frac{1}{8.4 t_{2}} d t_{2} .\\
& \leqslant \frac{0.00934x}{q\log x}.
\end{align}

Then by Lemma~\ref{l1} we have
\begin{align}
\nonumber S_{51} \leqslant & S\left(\mathscr{W}^{(1)},\left\{x^{\frac{0.475-\varepsilon}{2}}, x^{0.005}\right\}\right) \\ 
\nonumber \leqslant & (1+o(1)) \frac{ 4 C_3 |\mathcal{E}|}{\left(\log x^{0.475-\varepsilon}\right)\left(\log x^{0.025}\right)} \left\{F_{0}(2) F_{0}(5)\right\}+O\left(x(\log x)^{-5}\right) \\
\leqslant & 4.41937 \frac{C_3 x \log \log x}{ (\log x)^3}.
\end{align}

Similarly, we have
\begin{align}
\nonumber S_{52} \leqslant & (1+o(1)) \frac{ 4 C_3 x L}{q \log x \left(\log x^{0.475-\varepsilon}\right)\left(\log x^{0.025}\right)} \left\{F_{0}(2) F_{0}(5)\right\}+O\left(x(\log x)^{-5}\right) \\
\leqslant & 22.91504 \frac{C_3 x \log \log x}{ (\log x)^3},\\
\nonumber S_{41} \leqslant & (1+o(1)) \frac{ 4 C_3 x\left(
\int_{2.145}^{12} \frac{\log \left(2.145-\frac{3.145}{t+1}\right)}{t} d t
\right)}{q \log x \left(\log x^{0.475-\varepsilon}\right)\left(\log x^{0.025}\right)} \left\{F_{0}(2) F_{0}(5)\right\}+O\left(x(\log x)^{-5}\right) \\
\leqslant & 371.11243 \frac{C_3 x \log \log x}{ (\log x)^3},\\
\nonumber S_{42} \leqslant & (1+o(1)) \frac{ 4 C_3 x \left(
\int_{2.81}^{7.4} \frac{\log \left(2.81-\frac{3.81}{t+1}\right)}{t} d t
\right)}{q \log x \left(\log x^{0.475-\varepsilon}\right)\left(\log x^{0.025}\right)} \left\{F_{0}(2) F_{0}(5)\right\}+O\left(x(\log x)^{-5}\right) \\
\leqslant & 341.31874 \frac{C_3 x \log \log x}{ (\log x)^3},\\
\nonumber S_{61} \leqslant & (1+o(1)) \frac{ 4 C_3 x \left(
\int_{2}^{2.145} \frac{\log \left(t-1\right)}{t} d t
\right)}{q \log x \left(\log x^{0.475-\varepsilon}\right)\left(\log x^{0.025}\right)} \left\{F_{0}(2) F_{0}(5)\right\}+O\left(x(\log x)^{-5}\right) \\
\leqslant & 2.27032 \frac{C_3 x \log \log x}{ (\log x)^3},\\
\nonumber S_{62} \leqslant & (1+o(1)) \frac{ 4 C_3 x \left(
\int_{2}^{2.81} \frac{\log \left(t-1\right)}{t} d t
\right)}{q \log x \left(\log x^{0.475-\varepsilon}\right)\left(\log x^{0.025}\right)} \left\{F_{0}(2) F_{0}(5)\right\}+O\left(x(\log x)^{-5}\right) \\
\leqslant & 49.78864 \frac{C_3 x \log \log x}{ (\log x)^3},
\end{align}
where
\begin{align}
\nonumber L &=\int_{1 / 13}^{1 / 8.4} \frac{d t_{1}}{t_{1}} \int_{t_{1}}^{1 / 8.4} \frac{d t_{2}}{t_{2}^{2}} \int_{t_{2}}^{1 / 8.4} \frac{d t_{3}}{t_{3}} \int_{1 / 8.4}^{0.475-2 / 13-t_{3}} \frac{w\left(\frac{1-t_{1}-t_{2}-t_{3}-t_{4}}{t_{2}}\right)}{t_{4}} d t_{4} \\
\nonumber & \leqslant 0.5644 \int_{1 / 13}^{1 / 8.4} \frac{d t_{1}}{t_{1}} \int_{t_{1}}^{1 / 8.4} \frac{1}{t_{2}}\left(\frac{1}{t_{1}}-\frac{1}{t_{2}}\right) \log 8.4\left(0.475-\frac{2}{13}-t_{3}\right) d t_{2} \\
\nonumber & \leqslant 0.04839.
\end{align}

\subsection{Evaluation of $S_{7}$}
By Chen's role-reversal trick we know that
\begin{align}
\nonumber S_{71}=&\sum_{\substack{p+6 \in \mathcal{M}_k \\ \left(\frac{p+2}{q}, P\left(\left(\frac{x}{q}\right)^{\frac{1}{13}}\right)\right)=1}} 1 \\
\nonumber \leqslant& \sum_{\substack{m \in \mathcal{M}_k \\ \left(m-6, P\left(x^{\frac{0.025}{2}}\right)\right)=1 \\ \left(\frac{m-4}{q}, P\left(\left(\frac{x}{q}\right)^{\frac{1}{13}}\right)\right)=1}} 1+O\left(x^{\frac{1}{2}}\right) \\
=& S\left(\mathscr{W}^{(2)},\left\{x^{\frac{0.025}{2}}, \left(\frac{x}{q}\right)^{\frac{1}{13}}\right\}\right)+O\left(x^{\frac{1}{2}}\right).
\end{align}
we may write
$$
\left|\mathscr{W}_{\mathbf{d}}^{(2)}\right|=h(\mathbf{d})|\mathcal{M}_k|+R^{(2)}(\mathbf{d}),
$$
where
$$
h(\mathbf{d})= \begin{cases}\frac{1}{\varphi\left(d_{1} d_{2}\right)}, & \left(d_{1}, d_{2}\right)=\left(d_{1}, 2\right)=\left(d_{2}, 6\right)=1, \\
0, & \text {otherwise }\end{cases}
$$
and
$$
\begin{aligned}
\left|R^{(2)}(\mathbf{d})\right| \leqslant & \max _{\left(a, d_{1} d_{2}\right)=1}\left|\sum_{\substack{n \in \mathcal{M}_k \\
n \equiv a\left(\bmod d_{1} d_{2}\right)}} 1-\frac{1}{\varphi\left(d_{1} d_{2}\right)} \sum_{\substack{n \in \mathcal{M}_k \\
\left(n, d_{1} d_{2}\right)=1}} 1\right|+\frac{1}{\varphi\left(d_{1} d_{2}\right)} \sum_{\substack{n \in \mathcal{M}_k \\
\left(n, d_{1} d_{2}\right)>1}} 1 \\
=& R_{1}^{(2)}(\mathbf{d})+R_{2}^{(2)}(\mathbf{d}) .
\end{aligned}
$$

To deal with the error term, by the arguments similar to those for $S_{51}$, we have
$$
\sum_{d_{1} d_{2} \leqslant x^{\frac{1}{2}-\varepsilon}} \tau^{4}\left(d_{1} d_{2}\right)R^{(2)}(\mathbf{d}) \ll x(\log x)^{-5} .
$$

By the prime number theorem and summation by parts we have
\begin{align}
\nonumber \left|\mathcal{M}_{k}\right| &=\frac{1}{\varphi(q)} \sum_{z \leqslant p_{1} \leqslant \cdots \leqslant p_{k-1} \leqslant\left(\frac{x+6}{p_{1} \cdots p_{k-2}}\right)^{1 / 2}} \frac{x}{p_{1} \cdots p_{k-1} \log \frac{x}{p_{1} \cdots p_{k-1}}} \\
& =\left(1+O\left(\frac{1}{\log x}\right)\right) c_{k} \frac{\pi(x)}{\varphi(q)},
\end{align}
where
$$
c_{k}=\int_{k-1}^{199} \frac{d t_{1}}{t_{1}} \int_{k-2}^{t_{1}-1} \frac{d t_{2}}{t_{2}} \cdots \int_{3}^{t_{k-4}-1} \frac{d t_{k-3}}{t_{k-3}} \int_{2}^{t_{k-3}-1} \frac{\log \left(t_{k-2}-1\right) d t_{k-2}}{t_{k-2}} .
$$
By similar numerical integration used in \cite{Cai2017}, we have
\begin{equation}
C_0=\sum_{k=15}^{199} c_{k}<0.00408.
\end{equation}

Then from (29)--(31) we have
\begin{align}
\nonumber S_{71} \leqslant & (1+o(1)) \frac{ 4 C_0 C_3 \pi(x)}{\varphi(q) \left(\log x^{0.475-\varepsilon}\right)\left(\log x^{0.025}\right)} \left\{F_{0}(2) F_{0}(6.175)\right\}+O\left(x(\log x)^{-5}\right) \\
\leqslant & 2.38485 \frac{C_3 x \log \log x}{ (\log x)^3}.
\end{align}

Similarly, we have
\begin{align}
\nonumber S_{72} \leqslant & (1+o(1)) \frac{ 4 C_0 C_3 \pi(x)}{\varphi(q) \left(\log x^{0.475-\varepsilon}\right)\left(\log x^{0.025}\right)} \left\{F_{0}(2) F_{0}(3.99)\right\}+O\left(x(\log x)^{-5}\right) \\
\leqslant & 1.57643 \frac{C_3 x \log \log x}{ (\log x)^3},\\
\nonumber S_{73} \leqslant & (1+o(1)) \frac{ 4 C_0 C_3 \pi(x)}{\varphi(q) \left(\log x^{0.475-\varepsilon}\right)\left(\log x^{0.025}\right)} \left\{F_{0}(2) F_{0}(2)\right\}+O\left(x(\log x)^{-5}\right) \\
\leqslant & 1.37432 \frac{C_3 x \log \log x}{ (\log x)^3},\\
\nonumber S_{74} \leqslant & (1+o(1)) \frac{ 4 C_0 C_3 \pi(x)}{\varphi(q) \left(\log x^{0.475-\varepsilon}\right)\left(\log x^{0.025}\right)} \left\{F_{0}(2) F_{0}(2)\right\}+O\left(x(\log x)^{-5}\right) \\
\leqslant & 1.37432 \frac{C_3 x \log \log x}{ (\log x)^3}.
\end{align}

\subsection{Proof of theorem 1.1}
By (14)--(19), (23)--(28) and (32)--(35) we get
$$
S_{1}+S_{2} \geqslant 3194.23324 \frac{C_3 x \log \log x}{(\log x)^{3}},
$$
$$
S_{3}+S_{4}+S_{5}+2S_{6}+S_{7} \leqslant 3181.18071 \frac{C_3 x \log \log x}{(\log x)^{3}},
$$
$$
4\pi_{1,3,14}(x) \geqslant (S_{1}+S_{2})-(S_{3}+S_{4}+S_{5}+2S_{6}+S_{7}) \geqslant 13.05253 \frac{C_3 x \log \log x}{(\log x)^{3}},
$$
$$
\pi_{1,3,14}(x) \geqslant 3.26313 \frac{C_3 x \log \log x}{(\log x)^{3}}.
$$
Now the proof of $\pi_{1,3,14}(x) \gg \frac{C_3 x \log \log x}{(\log x)^3}$ is completed. Then we can prove Theorem~\ref{t1} by replacing $q$ by products of small primes $q_1 q_2 \cdots q_{a-1}$ where $q_i$ denote a prime number satisfies
$$
a \geqslant 2, \quad  q_i < x^{\varepsilon}\ \operatorname{for\ every}\ 1 \leqslant i \leqslant a-1.
$$

\section{An upper bound result}
Now we finish this paper with a look at the upper bound estimate. Let
$$
\mathscr{W}^{\prime}=\{\{p+2, p+6\}: 7<p \leqslant x\},
$$
then we have
\begin{equation}
\pi_{1,1,1}(x) \leqslant S\left(\mathscr{W}^{\prime},\left\{x^{\frac{1}{10}}, x^{\frac{1}{10}}\right\}\right)+O\left(x^{\frac{1}{10}}\right).
\end{equation}

By Lemma~\ref{l1} and some routine arguments we have
\begin{align}
\nonumber & S\left(\mathscr{W}^{\prime},\left\{x^{\frac{1}{10}}, x^{\frac{1}{10}}\right\}\right) \\
\nonumber \leqslant & (1+o(1))  C_3 \pi(x) V\left(x^{\frac{1}{10}}\right)V\left(x^{\frac{1}{10}}\right)\{F(2)F(2)\} +O\left(x(\log x)^{-5}\right) \\
\leqslant & 100 \frac{C_3 x}{ (\log x)^3}.
\end{align}

Finally by (36)--(37) we get the following theorem of the upper bound orders of Hardy-Littlewood prime triples.
\begin{theorem}
$$
\pi_{1,1,1}(x) \ll \frac{C_3 x}{ (\log x)^3} \quad and \quad 
D_{1,1,1}(N) \ll \frac{N}{(\log N)^3}.
$$
\end{theorem}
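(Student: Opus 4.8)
The plan for $\pi_{1,1,1}(x)$ is to combine (36) and (37), which together already contain the whole argument. Recall that (36) is an elementary combinatorial reduction --- if $p,p+2,p+6$ are all prime and $p>x^{1/10}$, then $p+2$ and $p+6$ are primes exceeding $x^{1/10}$, so neither has a prime factor below $x^{1/10}$, and the pair $\{p+2,p+6\}$ is counted by $S\big(\mathscr{W}^{\prime},\{x^{1/10},x^{1/10}\}\big)$, the error $O(x^{1/10})$ absorbing the primes $p\leqslant x^{1/10}$ --- while (37) is the upper bound half of Lemma~\ref{l1} applied to $\mathscr{W}^{\prime}$ with $X=\pi(x)$, sieving parameters $z_1=z_2=x^{1/10}$ and $D_1=D_2=x^{1/5}$, so that $s_1=s_2=2$ and $F(2)=e^{\gamma}$. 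In that application the $\Omega$-condition of Lemma~\ref{l1} comes from Mertens' theorem, the remainder $\sum_{d_1 d_2\leqslant x^{2/5+\varepsilon}}\tau^{4}(d_1 d_2)|R(\mathbf{d})|\ll x(\log x)^{-5}$ is furnished by Bombieri's theorem (as $2/5<1/2$), and the local product collapses through the standard Euler-product manipulation to $V(z_0,h^*)V_1 V_2=(1+o(1))\,C_3\,V(x^{1/10})^2$. Chaining (36) and (37) gives $\pi_{1,1,1}(x)\leqslant 100\,C_3\,x(\log x)^{-3}+O(x^{1/10})\ll C_3\,x(\log x)^{-3}$.

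For $D_{1,1,1}(N)$ I would run the identical scheme with $x$ replaced by $N$ and $\mathscr{W}^{\prime}$ replaced by $\widetilde{\mathscr{W}}=\{\{N-p,\,p+6\}:7<p\leqslant N\}$. The combinatorial reduction carries over verbatim: since $N$ is even and $p$ is an odd prime, $N-p$ is odd, and if $N-p$ and $p+6$ are both prime then both exceed $N^{1/10}$, except for the $O(N^{1/10})$ primes with $p>N-N^{1/10}$ or $p<N^{1/10}$; hence
$$
D_{1,1,1}(N)\leqslant S\!\left(\widetilde{\mathscr{W}},\left\{N^{1/10},N^{1/10}\right\}\right)+O\!\left(N^{1/10}\right).
$$
One then applies Lemma~\ref{l1} with $X=\pi(N)$, writing $|\widetilde{\mathscr{W}}_{\mathbf{d}}|=h(\mathbf{d})X+R(\mathbf{d})$ via the Chinese remainder theorem. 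The one new feature relative to the twin/triple setting is that the forms $N-p$ and $p+6$ can share a common prime divisor $\ell\mid N+6$; the standard remedy, exactly as in the classical upper bound for the binary Goldbach problem, is to delete in advance the residue classes $p\equiv N\ (\bmod\,\ell)$ and $p\equiv-6\ (\bmod\,\ell)$ for the primes $\ell\mid N(N+6)$ below $N^{1/10}$. After this adjustment $h$ is multiplicative, the conditions $h_1(\ell),h_2(\ell)\leqslant c\ell^{-1}$ and $h(\ell,\ell)\leqslant c\ell^{-2}$ hold with $c=2$, and the $\Omega$-condition again follows from Mertens' theorem. Taking $z_1=z_2=N^{1/10}$, $D_1=D_2=N^{1/5}$, $s_1=s_2=2$, controlling the remainder by Bombieri's theorem (the moduli range $N^{2/5+\varepsilon}$ being below $N^{1/2-\varepsilon}$), and collapsing the local product as before --- now to $V(N^{1/10})^2$ times the Goldbach-type singular series of the configuration $(n,N-n,n+6)$ --- the upper bound half of Lemma~\ref{l1} yields $D_{1,1,1}(N)\ll N(\log N)^{-3}$.

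I do not expect a genuine obstacle: the whole thing is a single textbook application of the two-dimensional linear sieve of Lemma~\ref{l1}. The places that deserve a line of care are (i) the two elementary combinatorial reductions; (ii) checking that the moduli range $x^{2/5+\varepsilon}$ (resp.\ $N^{2/5+\varepsilon}$) lies comfortably inside the Bombieri--Vinogradov range, so that the remainder is $\ll x(\log x)^{-5}$ (resp.\ $\ll N(\log N)^{-5}$); and (iii) the Euler-product bookkeeping that identifies $V(z_0,h^*)V_1 V_2$ with the singular series of the configuration times $V(z)^2$ --- in the $D_{1,1,1}$ case this also includes the slightly fussy pre-sieving at the primes dividing $N$ and $N+6$. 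Step (iii) is the one I would expect to be the most error-prone, although it is entirely mechanical.
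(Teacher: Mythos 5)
Your proposal is correct and follows essentially the same route as the paper: the elementary reduction (36) followed by the upper-bound half of Lemma~\ref{l1} applied to $\mathscr{W}^{\prime}$ with $z_1=z_2=x^{1/10}$, $s_1=s_2=2$, the remainder handled by Bombieri's theorem, giving the constant $100$ in (37). The paper leaves the sieve parameters and the $D_{1,1,1}(N)$ case as ``routine arguments,'' and your reconstruction (including the pre-sieving at primes dividing $N(N+6)$) is exactly the intended filling-in of those details.
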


\bibliographystyle{plain}
\bibliography{bib}
\end{document}